\spnewtheorem{assumption}{Assumption}{\bf}{\it}
\newcommand{\R}{\mathbb{R}}
\newcommand{\N}{\mathbb{N}}
\begin{document}

\title{Pareto-Optima for a Generalized Ramsey Model
}


\author{L. Frerick \and G. M{\"u}ller-F{\"u}rstenberger \and \\ E.W. Sachs \and L. Somorowsky}


\institute{L. Frerick \and E.W. Sachs \and L. Somorowsky \at Department of Mathematics, Trier University, 54286 Trier, Germany
\and 
G. M\"uller-F\"urstenberger \at Department of Economics, Trier University, 54286 Trier, Germany
}

\date{Received: date / Accepted: date}

\maketitle

\begin{abstract}
We consider a Ramsey model with several households with heterogeneous preferences who are able to borrow capital to each other. Since the capital constraints of one household then depends on the others' capital, one can no longer optimize each household's welfare individually. This problem formulation leads to a Pareto optimization problem. We consider existence and first order optimality conditions as well as some numerical results.

\keywords{Ramsey model \and Pareto optima \and optimality conditions}
\end{abstract}

\section{Introduction}
Our paper deals with a variation of the neoclassical growth model developed by \citet{ramsey} in the twenties of the last century.  The interesting part in the Ramsey model is the endogenous saving rate that is determined by the welfare optimization of households under a given market interest rate. Ramsey himself did not restrict the capital stock held by one of the individuals in the respective economy, but this capital stock is often constrained in the literature. Actually, Ramsey's model has been modified and analyzed in various ways. A vast literature exists that deals with different types of equilibria, with finite or infinite time horizons in a continuous or a discrete setting, considers taxation \citep{sorger} or a growing population \citep{acemoglu}, models a central planning authority \citep{cass} or studies Pareto optimization for a set of different households that are connected by their initial capital distribution \citep{van}.

In this paper we consider a group of $H$ economic agents (or households) with heterogeneous preferences. Each agent is endowed with an initial capital asset ($a_0^h $), and receives an income stream from supplying its labor-force ($l_t^h $).  Income from labor and capital is split between consumption and savings, where ``savings'' can be negative.  Agents seek to maximize intertemporal welfare, the time horizon is assumed finite ($T\in \N$). The number of agents is assumed small relative to the total population, hence an agent's activity has no impact on commodity and asset prices. 

Although this set-up is close to the standard Ramsey problem, where agents smooth their consumption path by shifting income over time via the capital market, it differs significantly. We now assume that the agents have formed a coalition to establish a capital sub-market. There they can borrow or lend to each other at world market interest rates, but the coalition's net debt-position must be non-negative in every point in time. Therefore some agents may have negative capital assets, but the coalition is free of debt when considered from the outside. In other words, the coalition makes sure that the default of individual debtors has no repercussions on the world markets. Since each agent's capital depends on the capital of the other agents and they altogether must have a nonnegative capital stock, we have to optimize their welfare simultaneously, which leads to a vector or Pareto optimization problem.

The real world situation we have in mind is a small country. Because of financial disruption, foreign investors lost confidence in its solvency.  The country is still embedded in the global capital market system, but its net-debt position must now be positive at any point in time. In this way, the outstanding loans may serve as collateral for outstanding debt, hence the country cannot default as a whole on its foreign debtors.\footnote{ We do not discuss in this paper, how this is framed into institutions.}

 Another example is a credit union, where its members team up to lend and borrow to each other at world market conditions but do not need to fulfill all regulations of global capital markets.  Also {\em peer-to-peer banking} serves as another example for our model. Via online platforms private individuals borrow money to other ones. Our idea is, that individual agents are allowed to incur debt (what weakens the market imperfection) but the community as a whole has to hold a non-negative capital stock. Like \citet{beckerlib}, we avoid the market imperfection that forbid the households to incur debt and define a new capital restriction instead.

In order to ensure the existence of a solution of the households optimization problem (and its uniqueness), the capital stock of the households is often constrained. That is why borrowing constraints are introduced which are huge market imperfections in the context of economics. The most common way to constrain the household's capital stock is to bound it from below by zero \citep[see][]{alt,becker,beckera,beckerb,sorger}. 

An approach to weaken the assumption of a non-negative capital stock of each household in an infinite time horizon model is the {\em liberal borrowing constraint} of \citet{beckerlib}. In this model households are allowed even to hold negative capital stocks only if they can cover their loans by future wage incomes for a given finite time horizon. 

There have been various approaches to Pareto optimization in the literature. \citet{van} and \citet{lucas} both showed the existence of a Pareto optimal consumption path using scalarization and Debreu's theory of a valuation equilibrium \citep[see][]{debreu}. In our context, we analyze the households optimization problem in the context of vector optimization theory. We will use the definition of a minimal element of a set according to \citet{jahn} which is equivalent to the commonly known definition of a Pareto optimum in the finite dimensional real space. The vector optimization approach allows us to optimize independently from a priori defined scalarization weights, which is a generalization of the household's optimization problem in the Ramsey model. If one is interested in a particular Pareto solution from the (possibly large) set of Pareto solutions, there is need of a ``(human) decision maker who knows the problem domain'' or a technique to choose the ``best'' Pareto solution in an appropriate way \citep{klamroth}.

The main issue in this paper is a mathematical rigorous theory about the existence and characterization of Pareto optimal allocations for the households' sector in such a setting. We assume that agents can have negative capital assets ($a^h_t \in\R$) for all periods, however the aggregated capital stock has to be nonnegative at all times. This is called the generalized Ramsey model with partially accepted default. Moreover, we consider the case where no agent is allowed to exit the economy at $T+1$ with negative savings. We call this the generalized Ramsey model without default. In the paper we use mathematical optimization theory to show the existence of optimal points in a vector optimization context. Then we formulate necessary and sufficient optimality conditions and interpret them in the two special contexts we alluded to earlier. To our knowledge this is new to the literature.

We model the household's sector analogously to \citet{becker}, i.e. we consider an economy with many households that differ in preferences according to time and utility.  \citet{becker} has shown that -under certain assumptions- in the long-run the most patient household holds all the money - a conjecture of Ramsey that was proven here for the first time. However, there are variants of the Ramsey model where the steady state is non-degenerate \citep[see][]{sarte,lucas}. In order to keep the model as general as possible, we allow different discount factors and instantaneous utilities. The analysis of a possible impact on the steady state in our generalized model with an infinite time horizon is part of our future work.

In the second section we define the underlying optimization problem and state various assumptions. An existence theorem is proven in section 3. The necessary and sufficient optimality conditions are derived in section 4 and they are discussed in various lemmas and corollaries. Section 5 deals with a slightly modified problem and section 6 concludes with some numerical results.


\section{Problem Setting as a Vector Optimization Problem}

In this paper, we formulate a version of the well-known Ramsey Equilibrium Model that allows for households with different preferences according to time and utility.  We consider the Ramsey model with a discrete set of households, a single capital good and a competitive labor market. Time is measured in discrete periods $t\in\{0,...,T\}$, $T\in\N$. We assume the households to be heterogeneous with respect to utility and time preferences. 

We denote the instantaneous utility function of each household  by 
\[u^h:(0,\infty)\to\R, \quad h\in\{1,...,H\},\ H\in\N,\] 
and the respective time discount factor by $\beta^h\in(0,1)$.
Households are assumed to discount future utility exponentially \citep[p. 180]{acemoglu}. Every household is endowed with $l^h\in\R_+$ units of labor per period which it supplies in an inelastic way. We denote the wage rate in period $t$ by $\omega_t\ge 1$ and the interest rate by $r_t>0$ which we assume both to be given. 

These assumptions have some consequences: First of all, there do exist differences in income between the households due to differences in labor capacity. Second, the evaluation of lifetime utility depends on the the households consumption profile and the amount of labor it is able to supply. And thirdly, the only variable in the production sector is the aggregated capital the households supply since labor is treated as a fixed factor.

We assume the initial capital endowment of every household to be given by $a_0^h\in\R$ such that the aggregated initial capital endowment satisfies
\[\sum_{h=1}^H a_0^h>0.\]
Furthermore, we assume that the capital stock $a^h_t$ of every household $h\in\{1,...,H\}$ in period $t\in\{0,...,T\}$ declines by a factor $\delta\in(0,1)$ and the investment  increases by a factor $\tau\ge 1$ during every period. The consumption of household $h$ in period $t$ is denoted by $c_t^h > 0$. Hence, the capital stock held by household $h$ at the beginning of period $t+1$ is given by the following equation
\[a_{t+1}^h=\tau \omega_t l^h+(\tau(1+r_t)-\delta)a_t^h-\tau c_t^h.\] 
In contrast to many well-established models like in \citet{beckera}, we do allow negative capital stocks in all periods. This means that single households can incur debt, but we assume that the aggregated capital stock of all households remains non-negative over all time periods, i.e.
\begin{equation} \label{positive}
\sum_{h=1}^H a_t^h
\ge 0, \quad t=1,...,T+1.
\end{equation}

Moreover, we postulate that the households are competitive agents that perfectly foresee the sequence of factor returns, $\{r_t,\omega_t\}_{t=0}^T$. Then, every household has to solve the following utility maximization problem:
\begin{equation}\label{Ph}
\begin{split}
\max_{(c^h,a^h)} \sum_{t=0}^T (\beta^h&)^t u^h(c_t^h), \quad h=1,...,H,\\
s.t.\quad  a_{t+1}^h&=\tau \omega_t l^h+(\tau(1+r_t)-\delta)a_t^h-\tau c_t^h, ~~ h=1,...,H, ~~ t=0,...,T,\\
\sum_{h=1}^H a_t^h&\ge 0, \quad t=1,...,T+1, \qquad c^h_t > 0,\quad t=0,...,T,\\
 \text{with} ~~ \sum_{h=1}^H a_0^h&>0,\ (a_0^h)_{h=1}^H\text{ given}.
\end{split}
\end{equation}
The constants satisfy the following conditions:
\begin{equation} \label{constants}
  r_t, l^h > 0,~~ \omega_t, \tau \geq 1,~~ \beta^h, \delta \in (0,1). 
  \end{equation}
If we replace the condition (\ref{positive}), that only the aggregate capital stock is non-negative, by the usual condition in the literature \citep[see for example][]{beckera}  that this non-negativity holds for the capital stock of each household, i.e. 
\[ a^h_t \geq 0, \quad h=1,...,.H, ~~ t=1,...,T+1 \]
 at any time instance, then the $H$ optimization problems in (\ref{Ph}) are decoupled and can be solved separately from each other. 
 
The fact that a household can accumulate debt, if they altogether have non-negative accumulated capital stock, makes this optimization problem more interesting but also mathematically more challenging. The optimization problem (\ref{Ph}) is a multi-objective optimization problem and we use the notation of Pareto-optimality to define solutions of this optimization problem.
 
For notational purposes, we rewrite the $H$ optimization problems in (\ref{Ph}) as a vector optimization problem in order to find a Pareto optimal solution for all households. Let us define the vector valued objective functional
\begin{equation}
\begin{split}
\varphi&:\R^ {2H(T+1)}\to \R^H,\\
\varphi_h(z)&:=-\sum_{t=0}^T(\beta^h)^tu^h(c_t^h),\quad h=1,...,H
\end{split}
\end{equation}
with variables 
\[z=(c_0^1,c_1^1,...,c_T^1,...,c_0^H,...,c_T^H,a_1^1,...,a^1_{T+1},...,a_1^H,...,a^H_{T+1})^T \in \R^ {2H(T+1)}\] 
and a set of functions
\[u^h:(0,\infty)\to\R ,\quad h=1,...,H\] 
which satisfy the so-called Inada Condition:
\begin{assumption} \label{ass_inada} [Inada Condition]
For $h=1,...,H$ let $u^h:(0,\infty)\to\R$ be twice continuously differentiable and the following conditions hold:
\begin{equation} \label{Inada}
\begin{array}{c}
(u^h)'(c)>0,\ (u^h)''(c)<0, ~~c>0, \\[2mm]
\lim_{c\to\infty}(u^h)'(c)=0,\ \lim_{c\to 0}(u^h)'(c)=+\infty, \quad h=1,...,H.
\end{array}
\end{equation}
\end{assumption}
This condition guarantees that households choose a positive consumption at each point of time \citep[see for example][]{beckerb} which will be essential in context of the first order conditions later on. On the other hand, this also implies that the feasible set is not necessarily compact which has to be considered in an existence proof.

In order to formulate the constraints, we define the following mappings
\[f:\R^ {2H(T+1)}\to \R^{H(T+1)},~~~g:\R^{2H(T+1)}\to \R^{T+1}\]
by
\[f_t^h(z):=a_{t+1}^h-\tau \omega_t l^h-(\tau(1+r_t)-\delta)a_t^h+\tau c_t^h, \quad t=0,...,T,~~h=1,...,H\]
and
\[ g_t(z):=-\sum_{h=1}^Ha^h_t, \quad t=1,..,T+1.\]
The components of the function $f$ are set as follows
\[f=(f_1,...,f_{H(T+1)})^T=(f_0^1,...,f_T^1,...,f_0^H,...,f^H_T)^T.\]

Then, the aggregated optimization problem of all households can be written as a vector optimization problem in the following way.

\begin{definition}
Let the vector $(a_0^1,...,a_0^H)$ be given such that $\sum_{h=1}^Ha_0^h>0$. Then we define the vector optimization problem
\begin{equation}\label{P}
\min_{z\in\mathcal{U}}\varphi(z)
\end{equation}
where $\mathcal{U}$ denotes the set of feasible points
\begin{equation*}
\begin{split}
\mathcal{U}:=\{z\in\hat{\mathcal{U}}:\ f_t^h(z)&=0\quad (t=0,...,T;\ h=1,...,H),\\ 
g_t(z)&\le 0 \quad (t=1,...,T+1)\}
\end{split}
\end{equation*} 
with
\[\hat{\mathcal{U}}:=\{z\in\R^{2H(T+1)}:\ z_1,...,z_{H(T+1)} > 0\}.\]
\end{definition}

It is easy to prove that the feasible set is nonempty which is stated in the following lemma.
 \begin{lemma} \label{nonempty}
The feasible set $\mathcal{U}$ is nonempty.
\end{lemma}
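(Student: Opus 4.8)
The plan is to exhibit an explicit feasible point. The only genuinely restrictive requirements are the positivity of consumption, $c_t^h>0$, and the aggregate nonnegativity $\sum_{h=1}^H a_t^h\ge 0$ for $t=1,\dots,T+1$; the equality constraints $f_t^h(z)=0$ merely express $a_{t+1}^h$ as an affine function of $a_t^h$ and $c_t^h$, so once the consumption values $c_t^h$ are fixed, the whole vector $z$ is determined by the given data $(a_0^h)_{h=1}^H$.

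First I would analyse the ``no-consumption'' recursion obtained by formally setting $c_t^h=0$, i.e. $\bar a_{t+1}^h=\tau\omega_t l^h+(\tau(1+r_t)-\delta)\bar a_t^h$ with $\bar a_0^h=a_0^h$. Writing $\bar A_t:=\sum_{h=1}^H\bar a_t^h$, $L:=\sum_{h=1}^H l^h>0$ and $\rho_t:=\tau(1+r_t)-\delta$, this gives $\bar A_{t+1}=\tau\omega_t L+\rho_t\bar A_t$. From (\ref{constants}) we have $\rho_t=\tau(1+r_t)-\delta>1-\delta>0$ and $\tau\omega_t L>0$, so a short induction starting from $\bar A_0=\sum_{h=1}^H a_0^h>0$ shows $\bar A_t>0$ for every $t=1,\dots,T+1$.

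Next I would switch on a small uniform consumption level: set $c_t^h:=\varepsilon$ for all $t,h$ with a parameter $\varepsilon>0$, let $a_t^h(\varepsilon)$ be the resulting capital path and $A_t(\varepsilon):=\sum_{h=1}^H a_t^h(\varepsilon)$. Each $A_t(\varepsilon)$ is an affine, hence continuous, function of $\varepsilon$ with $A_t(0)=\bar A_t>0$. Since the index set $\{1,\dots,T+1\}$ is finite, there is $\varepsilon_0>0$ such that $A_t(\varepsilon)\ge 0$ for all these $t$ whenever $0<\varepsilon<\varepsilon_0$. Fixing any such $\varepsilon$, the vector $z$ assembled from $c_t^h=\varepsilon$ and the associated $a_t^h(\varepsilon)$ satisfies $z_1,\dots,z_{H(T+1)}=\varepsilon>0$ (so $z\in\hat{\mathcal U}$), the equalities $f_t^h(z)=0$ by construction, and $g_t(z)=-A_t(\varepsilon)\le 0$; hence $z\in\mathcal U$ and $\mathcal U\neq\emptyset$.

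There is essentially no obstacle here: the only point to watch is that the sign condition $\tau(1+r_t)-\delta>0$ is genuinely available from (\ref{constants}), since this is what keeps the unconsumed aggregate capital positive and makes the perturbation argument work. If one prefers to avoid the continuity/compactness step, one can instead track the affine dependence explicitly, noting $A_t(\varepsilon)=\bar A_t-\varepsilon\,m_t$ with $m_t>0$ for $t\ge 1$, and simply choose $0<\varepsilon<\min_{1\le t\le T+1}\bar A_t/m_t$.
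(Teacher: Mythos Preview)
Your argument is correct: the no-consumption aggregate $\bar A_t$ is strictly positive by induction, and the affine perturbation to $c_t^h=\varepsilon$ preserves nonnegativity for small enough $\varepsilon$, giving a point of $\mathcal U$.

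The paper, however, takes a shorter and more explicit route. Rather than perturbing away from zero consumption, it simply sets $c_t^h=\omega_t l^h$, i.e.\ each household consumes exactly its wage income. This is positive since $\omega_t\ge 1$ and $l^h>0$, and it cancels the wage term in the capital recursion, leaving $a_{t+1}^h=(\tau(1+r_t)-\delta)\,a_t^h$. The aggregate then satisfies $\sum_h a_{t+1}^h=\rho_t\sum_h a_t^h$ with $\rho_t>0$, so positivity of $\sum_h a_0^h$ propagates immediately. No limiting or continuity step is needed, and the feasible point is given in closed form. Your approach has the virtue of being a template that would work even if no such convenient cancellation were available (e.g.\ if the constraint structure were less linear), but for this particular problem the paper's choice is the cleaner one-liner.
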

This can be seen easily from equation (\ref{Ph}) by choosing
\[ c_t^h = \omega_t l^h > 0, \quad a^h_{t+1} = (\tau(1+r_t) - \delta) a^h_t.\]
\\
In the sequel of the paper we consider Pareto optimal points which are defined as follows:
\begin{definition}\label{po}
We consider a solution $\overline{z}\in\mathcal{U}$ to be Pareto optimal if and only if there does not exist any vector $z\in\mathcal{U}$ such that
\[\varphi_h(z)\le \varphi_h(\overline{z})\text{ and }\varphi_{h^*}(z)<\varphi_{h^*}(\overline{z})\text{ for at least one }h^*\in\{1,...,H\}.\]
\end{definition}
In the following sections we will consider questions like the existence of Pareto optimal points and  optimality conditions. The latter ones will be interpreted in context of the application for the Ramsey model.

\section{Existence Theorem}

The proof of existence of Pareto optima turns out to be somewhat delicate, since in general the set of of feasible points $\mathcal{U}$ is not a closed set. This cannot be fixed easily, since the objective functions $u^h(c)$ are defined only for positive $c$ and is not defined for $c=0$ like the log-function, a typical utility function. 

This particular difficulty is often ignored in the literature and it is our goal to provide a rigorous proof for this problem. In order to prove the existence of a Pareto optimal solution of (\ref{P}) we use an existence theorem in \citet{jahn} and the technique of level sets.

 The definition of a Pareto optimum given above is akward to use from a mathematical point of view, we introduce the definition of a minimal element according to \citet{jahn}.
 
\begin{definition} \label{minpt}
Define the natural ordering cone of $\R^H$ as
\[C_{\R^H}:=\{y\in\R^H:\ y_i\ge 0\quad  i=1,...,H \}.\]
An element $y^*$ of $\varphi(\mathcal{U})$ is called a minimal element of $\varphi$ over $\mathcal{U}$ if it satisfies
\[(\{y^*\}-C_{\R^H})\cap \varphi(\mathcal{U})=\{y^*\}.\]
\end{definition}
This definition is in analogy to the definition \ref{po}, however formulated in mathematical terms.\\

\begin{theorem}\label{existence}
There exists a Pareto optimal solution of  (\ref{P}).
\end{theorem}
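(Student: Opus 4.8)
The plan is to apply the existence theorem for minimal elements from \citet{jahn}, which requires essentially two ingredients: that the ordering cone $C_{\R^H}$ is closed and pointed (which is immediate for the natural ordering cone), and that some level set of $\varphi$ over $\mathcal{U}$ is nonempty and compact. Since $\mathcal{U}$ itself is not closed — consumptions may tend to $0$ — the whole point is to replace $\mathcal{U}$ by a suitable sublevel set on which the Inada conditions force consumptions to stay bounded away from $0$, so that the effective feasible set over which we optimize \emph{is} compact.

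Concretely, first I would fix a reference point $\hat z \in \mathcal{U}$, which exists by Lemma \ref{nonempty}, and set $\hat y := \varphi(\hat z)$. Define the section
\[
S := \{ z \in \mathcal{U} : \varphi(z) \le \hat y \} = \mathcal{U} \cap \big(\varphi^{-1}(\hat y - C_{\R^H})\big),
\]
which is nonempty since $\hat z \in S$. A minimal element of $\varphi$ over $S$ is automatically a minimal element over $\mathcal{U}$ (any point dominating it would itself lie in $S$), hence a Pareto optimum of (\ref{P}); so it suffices to show $S$ is compact and that $\varphi$ is continuous, then invoke Jahn's theorem (a continuous map on a compact set whose image-section is compact possesses a minimal element with respect to a closed pointed cone). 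Continuity of $\varphi$ is clear from Assumption \ref{ass_inada}. For boundedness of $S$: on $S$ each $\varphi_h(z) = -\sum_t (\beta^h)^t u^h(c_t^h) \le \hat y_h$, and since $(u^h)'<0$ is decreasing with $(u^h)'\to 0$, $u^h$ grows, so an upper bound on $\varphi_h$ gives, term by term (all other terms being bounded below because $u^h$ is bounded below near any positive value... — more carefully, one isolates a single $c_t^h$ and bounds the remaining finitely many terms), an \emph{upper} bound on each $c_t^h$; the state equation $a_{t+1}^h=\tau\omega_t l^h+(\tau(1+r_t)-\delta)a_t^h-\tau c_t^h$ then propagates bounds on the $a_t^h$ by induction on $t$, starting from the fixed $a_0^h$. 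This bounds $z$ in one direction; the aggregate constraint $\sum_h a_t^h \ge 0$ together with the upper bounds on the individual $a_t^h$ bounds them from below, and then the state equation run backwards bounds $c_t^h$ from below away from $0$.

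The main obstacle — and the reason the paper flags this as delicate — is precisely showing that the closure of $S$ does not meet the boundary $\{c_t^h = 0\}$, i.e. that the infimum of each consumption over $S$ is strictly positive, so that $S$ is actually closed in $\R^{2H(T+1)}$ and hence compact. This is where the second Inada condition $\lim_{c\to 0}(u^h)'(c)=+\infty$, equivalently $\lim_{c\to 0} u^h(c) = -\infty$ (after integrating, up to the caveat that $u^h$ could in principle have a finite limit — but $(u^h)'\to\infty$ does force $u^h(c)\to-\infty$ as $c\to 0^+$), is essential: if some $c_t^h \to 0$ along a sequence in $S$, then $-\sum_s (\beta^h)^s u^h(c_s^h) \to +\infty$ (the offending term blows up while, using the already-established upper bounds on all consumptions, the other terms stay bounded above), contradicting $\varphi_h \le \hat y_h$ on $S$. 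Hence the consumptions are uniformly bounded below on $S$, $S \subseteq \hat{\mathcal{U}}$ is closed and bounded, thus compact, and applying Jahn's existence theorem on $S$ yields the desired Pareto optimal solution. \qed
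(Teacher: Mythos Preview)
Your overall strategy matches the paper's: restrict to the sublevel section $S = \{z \in \mathcal{U}: \varphi(z) \le \hat y\}$, show this set is compact, and apply Jahn's existence theorem. Your closedness argument in the final paragraph --- using the Inada behaviour at $0$ together with already-established upper bounds on the other consumptions to rule out $c_t^h \to 0$ --- is also the paper's argument and is correct \emph{once those upper bounds are in hand}.

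The gap is in how you obtain the \emph{upper} bounds on the $c_t^h$. You claim the level-set inequality $\varphi_h(z) \le \hat y_h$ yields them ``term by term''. But this inequality reads $\sum_t (\beta^h)^t u^h(c_t^h) \ge -\hat y_h$, a \emph{lower} bound on the utility sum; since each $u^h$ is increasing, sending any single $c_{t^*}^h \to \infty$ only increases the left-hand side and the inequality remains satisfied. So the level set cannot bound consumption from above, and the isolation you sketch is circular (it would require lower bounds on the other $c_t^h$, which you have not yet established). Likewise, ``the state equation run backwards bounds $c_t^h$ from below away from $0$'' does not work: two-sided bounds on the $a_t^h$ yield only a possibly negative lower bound on $c_t^h$. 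The paper instead extracts the upper bounds from \emph{feasibility} alone: $c_t^h > 0$ and the recursion give $a_t^h \le a_{\max}$; then the aggregate constraint $\sum_h a_{t+1}^h \ge 0$, combined with the explicit solution of the recursion, forces $\sum_{h}\sum_{s} c_s^h \prod_v \gamma_v$ to be bounded, hence each $c_t^h \le c_{\max}$. The logical order must therefore be reversed --- feasibility supplies the upper bounds first, and only then does the level-set constraint (plus Inada) supply the strictly positive lower bounds. One smaller point: your claim that $(u^h)'(c)\to\infty$ forces $u^h(c)\to -\infty$ as $c\to 0^+$ is false; $u(c)=2\sqrt{c}$ satisfies all of Assumption~\ref{ass_inada} yet has $u(0^+)=0$. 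When $u^h$ is bounded below, the blow-up argument for closedness fails and a separate treatment is needed (the paper also glosses over this).
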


\begin{proof}
According to \citet[p. 139]{jahn} we define a section $S_{y}$ of the image $S:=\varphi(\mathcal{U})$ as
\[
S_{y}:=(\{y\}-C_{\R^H})\cap \varphi(\mathcal{U})\neq\emptyset
\]
for an arbitrary $y\in\R^H$. Theorem 6.3 in \citet[p. 140]{jahn} guarantees the existence of a minimal element of $\varphi(\mathcal{U})$, if the set $\varphi(\mathcal{U})$ has a compact section for some $y\in\R^H$. 

By Lemma \ref{nonempty} the set of feasible points $\mathcal{U}$ is not empty. Hence there exists an element $\overline{z}\in \mathcal{U}$ and $\overline{y}:=\varphi(\overline{z})\in\varphi(\mathcal{U})$ and we can define the section
\begin{equation} \label{sec}
S_{\overline{y}}:=\varphi(\mathcal{U})\cap (\{\overline{y}\}-C_{\R^H}).
\end{equation}
We rewrite for given initial values $a_0^h$ the feasible set as
\begin{equation*}
\begin{split}
\mathcal{U}:=\{(&c_0^1,c_1^1,...,c_T^1,...,c_0^H,...,c_T^H,a_1^1,...,a^1_{T+1},...,a_1^H,...,a^H_{T+1})^T \in \R^ {2H(T+1)}:\\ &a_{t+1}^h=\tau \omega_t l^h+(\tau(1+r_t)-\delta)a_t^h-\tau c_t^h\quad t=0,...,T,\ h=1,...,H,\\ 
&\sum_{h=1}^H a_t^h\ge 0, \ t=1,...,T+1, \quad c_t^h>0,  \ t=0,...,T \}
\end{split}
\end{equation*}
From
\[ a_{t+1}^h=\tau \omega_t l^h +(\tau(1+r_t)-\delta)a_t^h -\tau c_t^h = \xi_t^h+\gamma_t a_t^h-\tau c_t^h \]
with $\xi_t^h:=\tau\omega_t l^h$ and $\gamma_t:=(\tau(1+r_t)-\delta)$
we obtain by induction
\begin{equation} \label{recurs}
a_{t+1}^h=\sum_{s=0}^t \xi_s^h \prod_{v=s+1}^t \gamma_v + \prod_{s=0}^t \gamma_s a_0^h - \tau(\sum_{s=0}^t c_s^h \prod_{v=s+1}^t \gamma_v)
\end{equation}
with $\prod_{\emptyset}\gamma_v:=1$. The constants in assumption \ref{constants} yield
$\gamma_t\ge 0$, 
$\xi_t^h\ge 0$ and $c_t^h > 0$ for all $h\in\{1,...,H\}$ and $t\in\{1,...,T\}$ and hence for some constant $a_{max}$
\begin{equation} \label{amax}
 a_{t+1}^h \le \sum_{s=0}^t \xi_s^h \prod_{v=s+1}^t \gamma_v + \prod_{s=0}^t \gamma_s a_0^h \leq a_{max},
\quad t=0,...,T, ~~ h=1,...,H. 
\end{equation}
The bound from below for $a_t^h$ follows from (\ref{positive}) via
\[ - (H-1) a_{max} \le -\sum_{k=1\atop{k\neq h}}^H a^k_t \le a_t^h \quad h=1,...,H,~~ t=1,...,T+1.\]
The boundedness of the $c_t^h$ can be concluded from (\ref{positive}), (\ref{recurs}) and (\ref{amax}):
\[
 0\le \sum_{h=1}^H a_{t+1}^h =\sum_{h=1}^H\left( a_{max}  - \tau(\sum_{s=0}^t c_s^h \prod_{v=s+1}^t \gamma_v) \right)\]
 This holds if and only if 
\[ \quad \sum_{h=1}^H \sum_{s=0}^t c_s^h \prod_{v=s+1}^t \gamma_v  \le \frac{H} {\tau} a_{max}.\]

Since $c_t^h$ are positive for all $t$ and $h$ we obtain that all $c_t^h$ are also bounded from above. 

We have shown that $\mathcal{U}$ is bounded and therefore also $\phi(\mathcal{U})$ and consequently is the section $S_{\bar y}$ also a bounded set. Unfortunately, the closedness cannot be concluded in this way, because, due to $c_t^h>0$, the set $\mathcal{U}$ is not closed in general. However, we can prove directly that the section $S_{\overline{y}}$ is closed using a level set argument because
\[ S_{\overline{y}}:=\varphi(\mathcal{U})\cap (\{\overline{y}\}-C_{\R^H})
=\{\varphi(z): z\in\mathcal{U}, \varphi_h(z) \leq \overline{y}_h\,\ h=1,...,H\}. \]
Since we have previously shown that the $c_t^h$ are bounded from above, i.e. for some $c_{max} > 0$ we have
\[ c_t^h \leq c_{max}, \quad h=1,...,H,~~ t=1,...,T,\]
the inequality $\varphi_h(z) \leq \overline{y}_h$ is only effective for those components of $z$ which are $c_t^h$, because
\[ -\sum_{t=0\atop{t\neq t^*}}^T (\beta^h)^t u^h(c_{max}) - (\beta^h)^{t^*} u^h(c_{t^*}^h) \leq -\sum_{t=0}^T (\beta^h)^t u^h(c_t^h) = \varphi_h(z) \le \overline{y}\]
which yields with the monotonicity of $u^h$
\[ c_{t^*}^h\ge (u^h)^{-1}\left( \frac{1}{(\beta^h)^{t^*}}\left(-\sum_{t=0\atop{t\neq {t^*}}}^T (\beta^h)^t u^h(c_{max})-\overline{y} \right)\right):=\vartheta_{t^*}^h>0 . \]

We define $\vartheta\in\R^{H(T+1)}$ as $\vartheta:=(\vartheta_0^1,...,\vartheta_T^1,...,\vartheta_0^H,...,\vartheta_T^H)$
Then we can rewrite the section $S_{\overline{y}}$ as
\[ S_{\overline{y}}=\{\varphi(z):\ z\in\mathcal{U}, z_i\ge \vartheta_i \ i=1,...,H(T+1)\}\]
which is bounded and closed, hence compact.
\end{proof}

\section{Necessary and Sufficient Optimality Conditions}
In this section, we derive necessary and sufficient optimality conditions for Pareto optimal points. Also here we will follow \citet[pp. 152-167]{jahn} for a rigorous derivation. First, we verify the constraint qualification for a minimal solution $\overline{z}$ of the problem. 
\begin{lemma}
$\varphi, f$ and $g$ are continuously partially differentiable at $\overline{z} \in \mathcal{U}$ with 
\begin{itemize}
\item $\nabla \varphi_h(\overline{z})$ the vector in $\R^{2(T+1)H}$ with $-(\beta^h)^t(u^h)'(\overline{c}_t^h)$ at position  $(T+1)(h-1)$ to $((T+1)h)$ and else $0$
\item $\nabla g_t(\overline{z})=(0,...,0,-1,0,...,0,-1,0,...,0)^T$, with  $H$ entries different from zero at position $((H+j)(T+1)+t)$ for $j=0,...,T$.
\item $\nabla f_t^h(\overline{z})=(0,...,0,\tau,0,...,0,-\xi_t,1,...,0)^T\in \R^{2H(T+1)}$ where the entries different from zero correspond to $c_t^h,\ a^h_t$ and $a^h_{t+1}$ and \\$\xi_t:=(\tau(1+r_t)-\delta)$.
\end{itemize}
\end{lemma}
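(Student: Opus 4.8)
The plan is to handle the three families of maps separately, since each is assembled from elementary smooth building blocks and the whole statement reduces to differentiating and then bookkeeping the index positions in the ordering $z=(c_0^1,\dots,c_T^H,a_1^1,\dots,a_{T+1}^H)^T$.

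For $f$ and $g$ there is essentially nothing analytic to prove: each component $f_t^h$ is an affine function of $z$ — a linear combination of the coordinates $a_{t+1}^h$, $a_t^h$ (the latter only for $t\ge 1$, since $a_0^h$ is a given datum, not a variable) and $c_t^h$ with the fixed constants of (\ref{constants}), plus the constant $-\tau\omega_t l^h$ — and each $g_t$ is linear, namely $-\sum_{h=1}^H a_t^h$. Affine and linear maps on $\R^{2H(T+1)}$ are $C^\infty$, hence in particular continuously partially differentiable at $\overline{z}$, and their partial derivatives are the (constant) coefficients: $\partial f_t^h/\partial c_t^h=\tau$, $\partial f_t^h/\partial a_t^h=-(\tau(1+r_t)-\delta)=-\xi_t$, $\partial f_t^h/\partial a_{t+1}^h=1$ and all remaining partials zero, while $\partial g_t/\partial a_t^h=-1$ for $h=1,\dots,H$ and all remaining partials zero. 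Locating $c_t^h$ at position $(h-1)(T+1)+(t+1)$ and $a_t^h$ at position $H(T+1)+(h-1)(T+1)+t$ then yields the stated gradient vectors for $f_t^h$ and $g_t$.

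For $\varphi$ the only point requiring care is the domain of $u^h$: by Assumption \ref{ass_inada} each $u^h$ is twice continuously differentiable, but only on $(0,\infty)$. Since $\overline{z}\in\mathcal{U}\subseteq\hat{\mathcal{U}}$ forces $\overline{c}_t^h>0$ for every $h$ and $t$, there is an open neighbourhood of $\overline{z}$ on which all consumption coordinates stay positive, and on it $\varphi_h(z)=-\sum_{t=0}^T(\beta^h)^t u^h(c_t^h)$ is a finite sum of $C^1$ functions of single coordinates, hence $C^1$; differentiating term by term gives $\partial\varphi_h/\partial c_t^h=-(\beta^h)^t(u^h)'(\overline{c}_t^h)$, all other partials zero, and continuity follows from continuity of $(u^h)'$. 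Collecting the nonzero entries over $t=0,\dots,T$ in the household-$h$ block of coordinates produces the claimed formula for $\nabla\varphi_h(\overline{z})$.

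I do not expect any genuine obstacle here: the content is simply that the pieces are smooth — affine, linear, or a $C^2$ outer function composed with a coordinate projection — and that the strict positivity $\overline{c}_t^h>0$, guaranteed by the constraint set $\hat{\mathcal{U}}$, keeps us inside the domain of the $u^h$. The only thing one must be disciplined about is the block structure of $z$ and the shift by $H(T+1)$ between the consumption block and the capital block when recording the positions of the nonzero gradient entries.
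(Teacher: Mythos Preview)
Your argument is correct: the affine/linear structure of $f$ and $g$ and the $C^2$-smoothness of $u^h$ on $(0,\infty)$ combined with $\overline{c}_t^h>0$ are exactly what is needed, and the bookkeeping of positions is straightforward. The paper itself states this lemma without proof, treating it as a routine computation, so your proposal simply supplies the details the authors left implicit.
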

We denote in the usual way the active set of inequality constraints in $\overline{z}\in\mathcal{U}$ as
\[\mathrm{I}(\overline{z}):=\{j\in\{1,...,T+1\}:\ g_j(\overline{z})=0\}.\]
We will need the following statements if we want to apply a theorem of \cite{jahn} to derive the optimality conditions:
\begin{lemma}
The gradients  
\[\nabla g_j,\nabla f_1,...,\nabla f_{H(T+1)}\]  
with $\ j\in\mathrm{I}(\overline{z})$ are linearly independent. 
\end{lemma}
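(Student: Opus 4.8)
The plan is to show directly that any vanishing linear combination of these gradients has only zero coefficients. So I would start by positing scalars $\mu_j$ for $j\in\mathrm{I}(\overline{z})$ and $\lambda_t^h$ for $h=1,\dots,H$, $t=0,\dots,T$ with
\[ \sum_{j\in\mathrm{I}(\overline{z})}\mu_j\,\nabla g_j(\overline{z})+\sum_{h=1}^H\sum_{t=0}^T\lambda_t^h\,\nabla f_t^h(\overline{z})=0\in\R^{2H(T+1)}, \]
and aim to conclude $\mu_j=0$ and $\lambda_t^h=0$ throughout.

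The key structural remark is that the variable vector $z$ splits into a block of $H(T+1)$ consumption coordinates $c_t^h$ followed by a block of $H(T+1)$ capital coordinates $a_t^h$, and that the consumption coordinate $c_t^h$ is touched only by $\nabla f_t^h$: by the previous lemma $\nabla f_t^h$ carries the value $\tau$ in the $c_t^h$-slot, every $\nabla f_{t'}^{h'}$ with $(t',h')\neq(t,h)$ vanishes there, and each $\nabla g_j$ is supported entirely among the capital coordinates. Reading off the $c_t^h$-component of the displayed identity therefore gives $\tau\lambda_t^h=0$, and since $\tau\ge 1>0$ by (\ref{constants}), we obtain $\lambda_t^h=0$ for every $h$ and $t$. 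This removes all the $f$-terms.

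What remains is $\sum_{j\in\mathrm{I}(\overline{z})}\mu_j\,\nabla g_j(\overline{z})=0$. Here I would use that the gradients $\nabla g_1,\dots,\nabla g_{T+1}$ have pairwise disjoint supports: $\nabla g_j$ is supported exactly on the $H$ coordinates $a_j^1,\dots,a_j^H$, with value $-1$ in each. Hence, inspecting (say) the $a_j^1$-component of the remaining identity for a fixed $j\in\mathrm{I}(\overline{z})$, only the term $\mu_j\nabla g_j$ contributes and we get $-\mu_j=0$. Thus all $\mu_j$ vanish as well, and linear independence follows. (The same sparsity argument incidentally handles the degenerate case $\mathrm{I}(\overline{z})=\emptyset$.)

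I do not expect a genuine obstacle: the statement is essentially a bookkeeping exercise about the sparsity pattern of the three families of gradients described in the preceding lemma. The only points needing mild care are keeping track of which coordinate of $z$ sits in which of the $2H(T+1)$ positions — in particular that $f_0^h$ still carries the coefficient $\tau$ in front of $c_0^h$ even though $a_0^h$ is a fixed datum rather than a variable — and invoking $\tau>0$ so that the consumption components genuinely force $\lambda_t^h=0$.
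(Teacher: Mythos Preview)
Your proposal is correct and follows essentially the same approach as the paper: the paper also sets a vanishing linear combination, observes that the $\nabla g_j$ are supported only among the capital coordinates so that the consumption rows force all the $\lambda_t^h$ to vanish, and then concludes that the remaining $\mu_j$ (the paper's $\varsigma_j$) are zero. Your write-up is in fact more explicit than the paper's, spelling out the role of $\tau>0$ and the disjoint supports of the $\nabla g_j$, but the underlying argument is the same.
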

\begin{proof}
The only solution of
\[\sum_{i=1}^{H(T+1)} \lambda_i\nabla f_i(\overline{z})+\sum_{\ j\in\mathrm{I}(\overline{z})}\varsigma_j\nabla g_j(\overline{z})=0\]
is $\lambda_i=0\ \forall i\in\{1,...,H(T+1)\}$ and $\varsigma_j=0\ \forall \ j\in\mathrm{I}(\overline{z})$.
Note that there are no entries different from zero in the first $H(t+1)$ lines of $\nabla g_j(\overline{z})$ which yields that $\lambda_i=0$ for all $i=1,...,H(T+1)$. This in turn leads to $\varsigma_j=0$ for all $j\in\mathrm{I}(\overline{z})$.
\end{proof}

Moreover this yields the following lemma:

\begin{lemma}
For every minimal solution $\overline{z}$ in $\mathcal{U}$ there exists some $z\in\R^{2H(T+1)}$ such that 
\[\nabla g_j(\overline{z})^T(z-\overline{z})<0 \text{ for all } j\in\mathrm{I}(\overline{z}).\] 
\end{lemma}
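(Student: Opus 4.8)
The plan is to exhibit a single vector $z$ explicitly rather than argue abstractly. The crucial structural observation is that every gradient $\nabla g_t(\overline z)$ has nonzero entries only in the coordinates corresponding to the capital variables $a_t^1,\dots,a_t^H$, and all of these entries equal $-1$; moreover these capital coordinates are completely unconstrained in $\hat{\mathcal U}$ (only the consumption coordinates $z_1,\dots,z_{H(T+1)}$ are required to be positive), and the point $z$ in the statement is merely required to lie in $\R^{2H(T+1)}$. Hence a uniform positive perturbation of all capital coordinates will make every $\nabla g_t(\overline z)^T(z-\overline z)$ strictly negative simultaneously.

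Concretely, I would let $d\in\R^{2H(T+1)}$ be the vector whose first $H(T+1)$ components (those attached to the $c_t^h$) are $0$ and whose last $H(T+1)$ components (those attached to the $a_t^h$) are all equal to $1$, and set $z:=\overline z+d$. Using the formula for $\nabla g_j(\overline z)$ from the preceding lemma, the inner product $\nabla g_j(\overline z)^T(z-\overline z)=\nabla g_j(\overline z)^T d$ reduces to the sum of the $H$ entries equal to $-1$, i.e. to $-H<0$. Since this holds for every $j\in\{1,\dots,T+1\}$, it holds in particular for every $j\in\mathrm I(\overline z)$, which is exactly the assertion.

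There is essentially no real obstacle here: the statement is a Mangasarian--Fromovitz-type regularity condition for the inequality constraints, and it is immediate precisely because the $g_t$ are ``aligned'' (each is the negative sum over a disjoint block of otherwise free variables). The only point meriting a sentence of care is that the constructed $z$ need not be feasible --- it lies in $\R^{2H(T+1)}$ but generally not in $\mathcal U$ --- so no compatibility with the equality constraints $f_t^h=0$ or with the positivity of the $c_t^h$ has to be verified. Combined with the linear independence of $\{\nabla f_i\}_{i=1}^{H(T+1)}\cup\{\nabla g_j:\ j\in\mathrm I(\overline z)\}$ from the previous lemma, this furnishes the constraint qualification needed to apply the multiplier theorem of \citet{jahn} when deriving the first-order conditions in the next step.
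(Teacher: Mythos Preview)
Your proof is correct and follows essentially the same approach as the paper: both exploit that each $\nabla g_t(\overline z)$ has $H$ entries equal to $-1$ in the capital coordinates and zeros elsewhere, so any positive perturbation of the capital components yields a strictly negative inner product. Your choice $d$ with all capital entries equal to $1$ is simply a specific instance of the paper's observation that every $z$ with $z_i-\overline z_i>0$ in the relevant capital positions works.
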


\begin{proof}
Since $\nabla g_t(\overline{z})=(0,...,0,-1,0,...,0,-1,0,...,0)^T$ with  $H$ entries different from zero at position $((H+j)(T+1)+t)$ for $j=0,...,T$, every $z\in\R^{2H(T+1)}$ with\\
$(z_t-\overline{z}_t)>0$ for all $t=1,...,T+1$ satisfies the desired inequality.
\end{proof}

Hence the constraint qualification for this problem is fulfilled and we can derive the necessary and sufficient optimality conditions.

For $\theta \in \R^H_+$, $\nu\in \R^{H(T+1)}_+$ and $\varrho\in \R^{H(T+1)}$ we define the real-valued Lagrangian of (\ref{P}) as

\begin{equation}\label{Lagr}
\mathcal{L}(z,\theta,\nu,\varrho):=\sum_{i=1}^H \theta_i \varphi_i(z)+\sum_{j=1}^{T+1}\nu_jg_j(z)+\sum_{k=1}^{(T+1)H} \varrho_k f_k(z).
\end{equation}
We define
\[\varrho:=(\lambda_0^1,...,\lambda_{T}^1,...,\lambda^H_0,...,\lambda_T^H)\in \R^{(T+1)H}.\]
Then (\ref{Lagr}) can be rewritten as
\begin{equation}\label{Lag}
\begin{split}
\mathcal{L}(z;\theta,\nu,\varrho)=&-\sum_{h=1}^H  \sum_{t=0}^T \theta_h (\beta^h)^t u^h(c^h_t) - \sum_{h=1}^H\sum_{t=1}^{T+1} \nu_t a_t^h  \\
& + \sum_{h=1}^H\sum_{t=0}^T \lambda_t^h (a_{t+1}^h-\tau \omega_tl^h -(\tau(1+r_t)-\delta)a_t^h+\tau c_t^h).
\end{split}
\end{equation}


Hence the  first order conditions are given in the following form

\begin{theorem} \label{thm-noc}
A vector $\overline{z}$ is Pareto-optimal if and only if there exist multipliers $\theta = (\theta_1,...,\theta_H)\in \R^H$ (where at least one $\theta_h\neq 0$), $\lambda=(\lambda_0^1,...,\lambda_{T}^1,...,\lambda^H_0,...,\lambda_T^H)$ $\in\R^{H(T+1)}$ and $\nu=(\nu_1,...,\nu_{T+1})\in \R^{T+1}$ such that

\begin{equation*}
\begin{split}
\text{(I)}\quad\frac{\partial \mathcal{L}(\overline{z};\theta,\nu,\varrho)}{\partial {c}_t^h}&=-\theta_h(\beta^h)^t(u^h)'(\overline{c}_t^h)+\tau \lambda_t^h=0,\quad t=0,...,T,~~ h=1,...,H\\[3mm]
\text{(II)}\quad\frac{\partial \mathcal{L}(\overline{z};\theta,\nu,\varrho)}{\partial {a}_t^h}&=-\nu_t+\lambda_{t-1}^h-\lambda_t^h(\tau(1+r_t)-\delta) = 0,\quad t=1,...,T,~~h=1,...,H\\[3mm]
\text{(III)}\quad\frac{\partial \mathcal{L}(\overline{z};\theta,\nu,\varrho)}{\partial {a}_{T+1}^h}&=  \lambda_T^h-\nu_{T+1}= 0,\quad h=1,...,H\\[3mm]
\text{and the compl}&\text{ementary slackness condition holds}\\[3mm]
\text{(IV)}\hspace{0.5cm}-\sum_{h=1}^H\nu_t\overline{a}_t^h&=0,\quad t=1,...,T+1\\[3mm]
\text{(V)}\hspace{1.4cm}\theta_h,\nu_t&\ge 0, \quad t=1,...,T+1,~~h=1,...,H.
\end{split}
\end{equation*} 
\end{theorem}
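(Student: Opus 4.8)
The plan is to obtain (I)--(V) as the Karush--Kuhn--Tucker conditions of the vector optimization problem (\ref{P}), using the generalized multiplier rule of \citet[pp.~152--167]{jahn} for the ``only if'' direction and the convex structure of (\ref{P}) for the ``if'' direction. The structural facts I would rely on are: each $\varphi_h$ is convex, since it depends only on the consumption variables and $-u^h$ is convex by Assumption~\ref{ass_inada}; every equality constraint $f_t^h$ is affine and every inequality constraint $g_t$ is linear; and the positivity constraints defining $\hat{\mathcal U}$ are open, so by the Inada condition an optimal $\overline c_t^h$ is interior to them and they carry no multiplier. Thus (\ref{P}) is, after scalarization, a convex program whose only ``real'' constraints are the affine $f$ and the linear $g$.

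For necessity, let $\overline z$ be a minimal element of $\varphi$ over $\mathcal U$ (equivalently, Pareto optimal). The three lemmas just proved furnish exactly the regularity hypotheses of Jahn's multiplier rule: continuous partial differentiability of $\varphi,f,g$ with the stated gradients, linear independence of $\nabla f_1,\dots,\nabla f_{H(T+1)}$ together with $\{\nabla g_j:j\in\mathrm{I}(\overline z)\}$, and the existence of a direction along which all active $g_j$ strictly decrease while (the $f_t^h$ being affine) the equality constraints stay satisfied. Hence there exist $\theta\in C_{\R^H}\setminus\{0\}$, $\nu\in\R_+^{T+1}$ and $\varrho\in\R^{H(T+1)}$ with $\nabla_z\mathcal L(\overline z;\theta,\nu,\varrho)=0$ and $\nu_t g_t(\overline z)=0$ for $t=1,\dots,T+1$. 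Reading off $\nabla_z\mathcal L=0$ block by block from the explicit form (\ref{Lag}) --- the derivatives with respect to $c_t^h$ ($0\le t\le T$), with respect to $a_t^h$ ($1\le t\le T$), and with respect to $a_{T+1}^h$ --- yields precisely (I), (II) and (III); complementary slackness $\nu_t g_t(\overline z)=-\sum_h\nu_t\overline a_t^h=0$ is (IV); and $\theta\ge 0$, $\nu\ge 0$ is (V).

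For sufficiency, assume $\overline z\in\mathcal U$ and that multipliers satisfying (I)--(V) exist with some $\theta_h\neq 0$. I would first upgrade $\theta\ge 0,\ \theta\neq 0$ to $\theta\gg 0$ by exploiting the coupling in the conditions: by (III) one has $\lambda_T^h=\nu_{T+1}$ for \emph{every} $h$, while (I) at $t=T$ gives $\tau\lambda_T^h=\theta_h(\beta^h)^T(u^h)'(\overline c_T^h)$ with $(u^h)'(\overline c_T^h)>0$; hence each $\theta_h$ has the same sign as $\nu_{T+1}\ge 0$, and since some $\theta_h\neq 0$ we get $\nu_{T+1}>0$ and therefore $\theta_h>0$ for all $h$. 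Now (I)--(IV) together with $\nu\ge 0$ and feasibility of $\overline z$ are exactly the KKT conditions of the scalar convex program $\min_{z\in\mathcal U}\sum_{h=1}^H\theta_h\varphi_h(z)$, so $\overline z$ is a global minimizer of $\sum_h\theta_h\varphi_h$ over $\mathcal U$. If $\overline z$ were not Pareto optimal there would be $z\in\mathcal U$ with $\varphi_h(z)\le\varphi_h(\overline z)$ for all $h$ and strict inequality for some $h^*$; multiplying by $\theta_h>0$ and summing gives $\sum_h\theta_h\varphi_h(z)<\sum_h\theta_h\varphi_h(\overline z)$, a contradiction. Hence $\overline z$ is Pareto optimal. (One may equivalently cite the sufficient optimality theorem for convex vector optimization in \citet{jahn}.)

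The block-by-block identification of the partial derivatives and of complementary slackness is routine once (\ref{Lag}) and the gradient lemma are in hand. The points that need genuine care are, first, that the Inada condition really places the optimum in the interior of the positivity constraints, so that the multiplier rule applies without extra multipliers and the first-order system closes; and second, the deduction $\theta\gg 0$ in the sufficiency part, which is what converts a mere scalarization/weak-Pareto statement into a genuine Pareto characterization and which hinges specifically on the terminal condition (III) being uniform in $h$. Verifying that the hypotheses of Jahn's theorems are met --- largely done by the preceding lemmas --- is the remaining load-bearing step.
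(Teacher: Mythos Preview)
Your proposal is correct and follows the paper's approach: invoke Jahn's multiplier rule (the preceding lemmas supply the constraint qualification) for necessity, and use convexity of the scalarized problem for sufficiency. The paper's own proof is a one-line citation of \citet[pp.~165--167]{jahn} together with the remark that the problem is convex; you have simply unpacked this, and in particular you fold the upgrade $\theta\gg 0$ (which the paper records separately as Corollary~\ref{cor1}(b),(c)) into the sufficiency argument so that scalarization yields genuine rather than weak Pareto optimality.
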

The proof follows directly from \citet[pp. 165-167]{jahn}. Note that our problem is a convex optimization problem, since the objective function is convex and the constraints are linear.

We can analyze these optimality conditions a bit further and obtain the following results.
\begin{corollary} \label{cor1}
The following statements are true:\\
a) The multipliers $\lambda_t^h$ are independent of the households, i.e. $\lambda_t^h = \lambda_t$ for all $t=0,...,T$.\\
b) The multipliers $\theta_h >0 $ are positive for all households $h=1,...,H$.\\
c) We have at the final time $\sum_{h=1}^H \overline{a}_{T+1}^h=0$.
\end{corollary}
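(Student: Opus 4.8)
The plan is to read everything off the first-order system (I)--(V) of Theorem~\ref{thm-noc}, using only the sign conditions in (V), the Inada condition (Assumption~\ref{ass_inada}), the feasibility $\overline z\in\mathcal U$ (which forces $\overline c_t^h>0$), and the hypothesis that at least one $\theta_h$ is nonzero. No new machinery is needed; the three parts follow in the order a), b), c), with each feeding into the next.

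For part a) I would argue by backward induction on $t$. Condition (III) gives $\lambda_T^h=\nu_{T+1}$ for every $h$, so $\lambda_T^h$ does not depend on $h$. Condition (II), valid for $t=1,\dots,T$, can be rewritten as $\lambda_{t-1}^h=\nu_t+(\tau(1+r_t)-\delta)\,\lambda_t^h$; since the coefficient $\tau(1+r_t)-\delta$ and the multiplier $\nu_t$ are common to all households, independence of $\lambda_t^h$ from $h$ propagates down to $\lambda_{t-1}^h$. Iterating from $t=T$ down to $t=1$ yields $\lambda_t^h=\lambda_t$ for all $t=0,\dots,T$.

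For part b) I would combine a) with condition (I), which now reads $\theta_h(\beta^h)^t(u^h)'(\overline c_t^h)=\tau\lambda_t$. Choose $h_0$ with $\theta_{h_0}\neq 0$; by (V) then $\theta_{h_0}>0$. Since $\overline c_t^{h_0}>0$, the Inada condition gives $(u^{h_0})'(\overline c_t^{h_0})>0$, and with $\beta^{h_0}\in(0,1)$, $\tau\ge 1$ we get $\lambda_t=\theta_{h_0}(\beta^{h_0})^t(u^{h_0})'(\overline c_t^{h_0})/\tau>0$ for every $t$. Substituting this back into (I) for an arbitrary $h$ gives $\theta_h=\tau\lambda_t/\big((\beta^h)^t(u^h)'(\overline c_t^h)\big)$, a quotient of strictly positive quantities, hence $\theta_h>0$ for all $h$. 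For part c) I would use complementary slackness (IV) at $t=T+1$, namely $\nu_{T+1}\sum_{h=1}^H\overline a_{T+1}^h=0$, together with (III) and b): $\nu_{T+1}=\lambda_T^h=\lambda_T>0$, so the sum must vanish, i.e. $\sum_{h=1}^H\overline a_{T+1}^h=0$.

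The only delicate point — and the hinge of the whole argument — is establishing strict positivity of the $\lambda_t$ (and thereby of $\nu_{T+1}$): this is where the Inada condition is essential, ruling out $(u^h)'(\overline c_t^h)=0$, as is the feasibility of $\overline z$, ruling out $\overline c_t^h=0$ where $(u^h)'$ is undefined. Once $\lambda_t>0$ is secured, parts b) and c) are immediate consequences of (I), (III) and (IV).
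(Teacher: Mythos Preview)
Your proposal is correct and follows essentially the same route as the paper: part a) is the identical backward induction from (III) through (II); parts b) and c) use the same key observation that, via (I) and the Inada condition, $\theta_h>0$ is equivalent to $\lambda_t>0$, which by a) is a household-independent property. The only cosmetic difference is that you argue b) directly (pick $h_0$ with $\theta_{h_0}>0$, deduce $\lambda_t>0$, deduce all $\theta_h>0$), whereas the paper phrases it by contradiction (assume some $\theta_{h^*}=0$, deduce $\lambda_t=0$, deduce all $\theta_h=0$); similarly for c).
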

\begin{proof}
a) Equation (III) yields that $\lambda_T^h=\nu_{T+1}$ for all $h=1,...,H$, hence independent of $h$. Since $\nu_{t}, t=1,...T$ is also independent of $h$, using equation (II) recursively, we obtain that all multipliers $\lambda^h_t$ are independent of $h$ for $t=0,...,T$.

b) Assume there is a household $h^*$ such that $\theta_{h^*}=0$. Then equation (I) then yields $\lambda^{h^*}_t=0$ for all points of time $t=0,...,T$. Since the $\lambda^{}_t=0$ are independent of $h$ by a), this implies by equation (I) again that $\theta_h = 0 $ for all $h=1,...,H$.

c) If we assume $\nu_{T+1}=0$, (III) implies that $\lambda_T^h=0$ for all $h$. With equation (I) we then obtain for all $h$
\[\theta_h(\beta^h)^T(u^h)'(\overline{c}^h_T)=0.\]
Due to $(u^h)'>0$ by assumption, this yields $\theta_h=0$ in contradiction to $\theta_h>0$ for all $h$.
Hence $\nu_{T+1}\neq 0$ and by the complementary slackness condition
\[\sum_{h=1}^H \overline{a}_{T+1}^h=0.\]
\end{proof}
From an application's point of view, the Corollary \ref{cor1} can be interpreted as follows. 

To understand part a) better, note that in problem (\ref{Ph}) during the optimization phase the capital stock $a^h_t$ of each individual household is not relevant but rather the sum of all households
\[ \alpha_t = \sum_{h=1}^H a^h_t, ~~~ t=0,...,T, \]
since for each time instant $t$ the $H$ equality constraints can be replaced by a single constraint
\[ \alpha_{t+1} =\tau \omega_t \sum_{h=1}^H l^h+(\tau(1+r_t)-\delta)\alpha_t-\tau \sum_{h=1}^H  c_t^h ~~ t=0,...,T.
\]
Therefore, there is per time instance only one Lagrange multiplier $\lambda_t$ which is independent of $h$.

Part b) shows that every household is influencing the minimal solution, or in other words, if one household is omitted, the minimal solution could change immediately. The condition in c) tells us that at the final time, everything will be consumed, which is obvious from the application, but could also be concluded  from the  optimality conditions.

In the following statement we have collected all the previous information about the minimal solution:

\begin{theorem} \label{thm-prop}
Consider the original optimization problem (\ref{Ph}) and let $\overline{c}^h_t, \overline{a}^h_t$ be minimal solutions. 
For given initial values $\overline{a}_0^1,....,\overline{a}_0^H$ with $\sum_{h=1}^H \overline{a}_0^h>0$  there exist multipliers $\theta = (\theta_1,...,\theta_H)$, all positive,
and $\nu=(\nu_1,...,\nu_{T+1})$, all nonnegative, such that 
the optimal consumption $\overline{c}_t^h$ can be obtained by solving recursively backwards
\begin{equation} \label{backward}
\lambda_{t-1} = \lambda_t(\tau(1+r_t)-\delta) + \nu_t, ~~\quad t=T, T-1, ..., 1.
\end{equation}
with the final condition
\begin{equation} \label{final}
\lambda_T = \nu_{T+1}.
\end{equation}
The optimal consumption is given by
\begin{equation}
\overline{c}^h_{t} = ((u^h)')^{-1} \left( \frac{\tau~ \lambda_{t}}{(\beta^h)^t ~\theta_h} \right) ~~ h=1,...,H,~~ t=0,...,T
\end{equation}
and the optimal capital stock $\overline{a}_t^h$ can be computed from  $\overline{c}_t^h$ by forward recursion
\begin{equation} \label{forward}
\overline{a}_{t+1}^h=\tau \omega_t l^h+(\tau(1+r_t)-\delta)\overline{a}_t^h-\tau \overline{c}_t^h, \quad t=0,...,T, ~~ h=1,...,H.
\end{equation}
Furthermore, the complementary slackness condition has to hold
\begin{equation} \label{comp}
\sum_{h=1}^H\nu_t\overline{a}_t^h=0, ~~t=1,...,T~~ \mbox{and}~~  \sum_{h=1}^H \overline{a}_{T+1}^h=0.
\end{equation}
\end{theorem}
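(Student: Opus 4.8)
The plan is to assemble Theorem \ref{thm-prop} directly from the first order conditions of Theorem \ref{thm-noc} together with Corollary \ref{cor1}, translating the vector-optimization statement back into the language of the original problem (\ref{Ph}); no genuinely new argument is needed, the work is bookkeeping over conditions already established.

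First I would invoke Theorem \ref{thm-noc}: a minimal (equivalently Pareto optimal) solution $(\overline{c},\overline{a})$, encoded as $\overline{z}$, produces multipliers $\theta\in\R^H$, $\lambda=(\lambda_0^1,\dots,\lambda_T^H)\in\R^{H(T+1)}$ and $\nu\in\R^{T+1}$ satisfying (I)--(V). By Corollary \ref{cor1}(a) the $\lambda_t^h$ are independent of $h$, so I set $\lambda_t:=\lambda_t^h$; by Corollary \ref{cor1}(b) each $\theta_h>0$; and $\nu_t\ge 0$ by (V). Condition (III) is exactly the final condition (\ref{final}), and condition (II), after suppressing the superscript $h$ (legitimate by Corollary \ref{cor1}(a)), rearranges to the backward recursion (\ref{backward}).

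Next I would read off the consumption formula from (I). That condition gives $\tau\lambda_t=\theta_h(\beta^h)^t(u^h)'(\overline{c}_t^h)$; since $\theta_h>0$ and $(u^h)'>0$ by Assumption \ref{ass_inada}, this simultaneously shows $\lambda_t>0$ and that the quantity $\tau\lambda_t/((\beta^h)^t\theta_h)$ is a positive real number. Strict concavity of $u^h$ makes $(u^h)'$ strictly decreasing, hence injective on $(0,\infty)$, and the Inada limits $\lim_{c\to 0}(u^h)'(c)=+\infty$, $\lim_{c\to\infty}(u^h)'(c)=0$ make $(u^h)':(0,\infty)\to(0,\infty)$ a bijection; thus $((u^h)')^{-1}$ is well defined on $(0,\infty)$ and applying it to both sides yields $\overline{c}_t^h=((u^h)')^{-1}\!\left(\tau\lambda_t/((\beta^h)^t\theta_h)\right)$. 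The forward recursion (\ref{forward}) is nothing but the equality constraint $f_t^h(\overline{z})=0$ defining $\mathcal{U}$, which every feasible point, in particular $\overline{z}$, satisfies. Finally the complementary slackness (\ref{comp}) is condition (IV) for $t=1,\dots,T$, and the terminal identity $\sum_{h=1}^H\overline{a}_{T+1}^h=0$ is precisely Corollary \ref{cor1}(c).

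The only step requiring a little care is the well-definedness and use of $((u^h)')^{-1}$ on the relevant domain, which is exactly where strict concavity and the Inada conditions enter; everything else is a direct transcription. I would therefore present the proof in four short steps: (i) quote Theorem \ref{thm-noc} and Corollary \ref{cor1}; (ii) obtain (\ref{backward}) and (\ref{final}) from (II), (III); (iii) invert (I) after checking positivity of its argument to get the consumption formula; (iv) note that (\ref{forward}) is the capital dynamics and (\ref{comp}) combines (IV) with Corollary \ref{cor1}(c). Convexity of the problem (objective convex, constraints affine), already noted after Theorem \ref{thm-noc}, can be mentioned to stress that these relations in fact characterize the minimal solutions.
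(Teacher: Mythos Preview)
Your proposal is correct and follows essentially the same route as the paper, which simply says that the statements follow from Theorem \ref{thm-noc} (together with Corollary \ref{cor1}) by solving (I) for $\overline{c}^h_t$; you merely add the explicit justification via the Inada conditions that $((u^h)')^{-1}$ is well defined on the relevant range, which the paper leaves implicit.
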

The statements follow from Theorem \ref{thm-noc} by solving (I) for $\overline{c}^h_t$  and in this way replacing the $\lambda_t$ in the forward equation for $\overline{a}_t^h$.

The concept for solving equations (\ref{backward}) - (\ref{comp}) can be used to set up a nonlinear system of equations. The unknown vector is the vector of multipliers $\nu$. The system of nonlinear equations consists of the complementary slackness conditions, where the vectors $a$ depend on $\nu$. This is formulated in corollary \ref{cor-alg}:
\begin{corollary}  \label{cor-alg}
Fix some positive multipliers $\theta \in \R^H$. 
The necessary and sufficient optimality conditions amount to find a nonnegative solution $\nu \in \R^{T+1}$ of a nonlinear system of equations $F(\nu)=0$, where $F: \R^{T+1} \rightarrow \R^{T+1}$ is defined as follows.

Given a vector $\nu \in \R^{T+1}$.
\begin{enumerate}
\item For $T+1$ given unknowns $\nu_1,....,\nu_{T+1}$, compute $\lambda \in \R^{T+1}$ backwards from 
\[  \lambda_T = \nu_{T+1}, \quad  \lambda_{t-1} = \lambda_t(\tau(1+r_t)-\delta) + \nu_t, ~~ t=T,T-1,...,1,
\]
\item Compute forward over time $t=0,...,T$
\[ {a}_{t+1}^h=\tau \omega_t l^h+(\tau(1+r_t)-\delta){a}_t^h-\tau ((u^h)')^{-1} \left( \frac{\tau~ \lambda_{t}}{(\beta^h)^t ~\theta_h} \right), ~~ h=1,...,H.
\]
\item Then the evaluation of $F(\nu)$ is given by
\[ F_t(\nu) = \sum_{h=1}^H\nu_t {a}_t^h, ~~t=1,...,T~~ \mbox{and}~~  F_{T+1} =\sum_{h=1}^H {a}_{T+1}^h.
\]
\end{enumerate}
\end{corollary}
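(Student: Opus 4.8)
The plan is to treat Corollary \ref{cor-alg} as a bookkeeping reformulation of Theorem \ref{thm-prop}: steps~1 and~2 are exactly the recipe of Theorem \ref{thm-prop} executed ``forward'' from a trial vector $\nu$, and step~3 records the residual of the complementary slackness relations (\ref{comp}). Before establishing the equivalence I would check that $F$ is well defined on $\{\nu\in\R^{T+1}:\ \nu\ge 0,\ \nu_{T+1}>0\}$. On this set step~1 gives $\lambda_T=\nu_{T+1}>0$, and since $\gamma_t:=\tau(1+r_t)-\delta>0$ by (\ref{constants}) and $\nu\ge 0$, the recursion $\lambda_{t-1}=\gamma_t\lambda_t+\nu_t$ yields $\lambda_t>0$ for every $t$. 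By the Inada Condition (Assumption \ref{ass_inada}) each $(u^h)'$ is a strictly decreasing bijection of $(0,\infty)$ onto $(0,\infty)$, so for the fixed positive $\theta$ the number $((u^h)')^{-1}\!\big(\tau\lambda_t/((\beta^h)^t\theta_h)\big)$ is well defined and positive; hence step~2 produces $a_t^h$ and positive consumptions $c_t^h$, and step~3 makes sense.

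For the direction ``optimality conditions $\Rightarrow$ $F(\nu)=0$'', I would start from a Pareto-optimal $\overline z$ carrying the fixed weight $\theta$, with multipliers $\lambda,\nu$ as in Theorem \ref{thm-noc}. By Corollary \ref{cor1}(a) the $\lambda_t^h$ collapse to a single $\lambda_t$, and (III)--(II) say precisely that this $\lambda$ is what step~1 generates from $\nu$; (I) solved for $\overline c_t^h$ is precisely the consumption formula of step~2; and the equality constraints $f_t^h(\overline z)=0$ are precisely the forward recursion (\ref{forward}) of step~2. Thus $\overline z$ is the vector reconstructed from $\nu$. The complementary slackness (IV) gives $F_t(\nu)=\sum_{h}\nu_t\overline a_t^h=0$ for $t=1,\dots,T$, while Corollary \ref{cor1}(c) gives $\nu_{T+1}>0$ (so $\nu$ lies in the domain of $F$) and $\sum_h\overline a_{T+1}^h=0=F_{T+1}(\nu)$; together with (V) this gives $\nu\ge 0$ and $F(\nu)=0$.

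For the converse I would take $\nu\ge 0$ with $\nu_{T+1}>0$ and $F(\nu)=0$, run steps~1--2 to obtain $\lambda$, positive $\overline c_t^h$ and $\overline a_t^h$, set $\overline z=(\overline c,\overline a)$, and keep $\theta$ as the fixed weight. By construction (I)--(III) and the equality constraints $f_t^h(\overline z)=0$ hold, (V) holds because $\theta>0$ and $\nu\ge 0$, and $F(\nu)=0$ supplies (IV) and $\sum_h\overline a_{T+1}^h=0$; the sufficiency part of Theorem \ref{thm-noc} (our problem being convex) then certifies that $\overline z$ is Pareto-optimal. I expect the one point needing real care to be primal feasibility, i.e.\ $\overline z\in\mathcal U$, equivalently $\sum_h\overline a_t^h\ge 0$ for $t=1,\dots,T$: at indices with $\nu_t>0$ this is forced by $F_t(\nu)=\nu_t\sum_h\overline a_t^h=0$, but at indices with $\nu_t=0$ it is not implied by $F(\nu)=0$ and must be verified separately before one may invoke the sufficiency part of Theorem \ref{thm-noc}. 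All the remaining content is the purely mechanical matching of the three steps with conditions (I)--(V), which is routine.
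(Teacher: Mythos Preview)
Your proposal is correct and follows exactly the paper's approach: the paper presents the corollary as an immediate bookkeeping reformulation of Theorem~\ref{thm-prop} and gives no further argument. Your observation that $F(\nu)=0$ alone does not enforce $\sum_h a_t^h\ge 0$ at indices with $\nu_t=0$ is a genuine subtlety the paper's statement glosses over, and you are right to flag it as an additional side condition on the admissible roots of $F$.
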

By changing the weights $\theta_h$ we obtain the efficient frontier of the minimal solutions.

%
%
%
%

\section{Generalized Ramsey Model without Default}
Up to this point we made the assumption that at the final time the households have a nonnegative aggregate capital stock 
\[ \sum_{h=1}^H a^h_{T+1} \geq 0, \]
which implies that individual households could have negative capital stock at the end of the time period. 
If one wants to avoid this situation and requires that eventually each individual household has no negative capital stock, we have to change the single final inequality constraint to a set of inequality constraints at time $T$
\[  a^h_{T+1} \geq 0 \quad h=1,..,H. \] 
If one takes a closer look at the equation for the last time period
\[ a_{T+1}^h =\tau \omega_T l^h+(\tau(1+r_T)-\delta)a_T^h-\tau c_T^h  \]
one sees quickly, that $a_{T+1}^h$ cannot be positive in the optimum, since in such a case one could reduce $a_{T+1}^h$ to zero and at the same time increase $c_T^h$ by the same amount which would yield a higher value in the objective function. This contradicts the Pareto optimality. Hence we will use here
\[  a^h_{T+1} = 0 \quad h=1,..,H. \] 

The resulting optimization problem changes slightly from (\ref{Ph}) to
\begin{equation}\label{Phmod}
\begin{split}
\max_{(c^h,a^h)} \sum_{t=0}^T (\beta^h&)^t u^h(c_t^h), \quad h=1,...,H,\\
 a_{t+1}^h&=\tau \omega_t l^h+(\tau(1+r_t)-\delta)a_t^h-\tau c_t^h, ~~ c^h_t > 0,\\
 & \qquad t=0,...,T-1,~ h=1,...,H\\
\quad  0&=\tau \omega_T l^h+(\tau(1+r_T)-\delta)a_T^h-\tau c_T^h, ~~ c^h_T > 0 \quad t=0,...,T,\\
\sum_{h=1}^H a_t^h&\ge 0, ~~ t=1,...,T \\
 \text{with} ~~ \sum_{h=1}^H a_0^h&>0,\ (a_0^h)_{h=1}^H\text{ given}.
\end{split}
\end{equation}
The arguments on existence of solutions and necessary and sufficient optimality conditions can be modified slightly and we can prove existence of minimal points in the same way as for the previous problem. 
However, the optimality conditions change due to the new set of inequality constraints. To interpret these properly we define the modified Lagrangian for this case.
\begin{equation}\label{Lagmod}
\begin{split}
\mathcal{L}(z;\theta,\nu,\varrho)=&-\sum_{h=1}^H  \sum_{t=0}^T \theta_h (\beta^h)^t u^h(c^h_t) - \sum_{h=1}^H\sum_{t=1}^{T} \nu_t a_t^h  \\
& + \sum_{h=1}^H\sum_{t=0}^{T-1} \lambda_t^h (a_{t+1}^h-\tau \omega_tl^h -(\tau(1+r_t)-\delta)a_t^h+\tau c_t^h) \\
& + \sum_{h=1}^H \lambda_T^h (-\tau \omega_T l^h -(\tau(1+r_T)-\delta)a_T^h+\tau c_T^h)
\end{split}
\end{equation}

The theorem on  optimality conditions is rewritten as follows
\begin{theorem} \label{thm-mod}
A vector $\overline{z}$ is Pareto-optimal if and only if there exist multipliers $\theta = (\theta_1,...,\theta_H)\in \R^H$ (where at least one $\theta_h\neq 0$), $\lambda=(\lambda_0^1,...,\lambda_{T}^1,...,\lambda^H_0,...,\lambda_T^H)\in\R^{H(T+1)}$, $\nu=(\nu_1,...,\nu_{T+1})\in \R^{T+1}$ such that
\begin{equation*}
\begin{split}
\text{(I)}\quad\frac{\partial \mathcal{L}(\overline{z};\theta,\nu,\varrho)}{\partial {c}_t^h}&=-\theta_h(\beta^h)^t(u^h)'(\overline{c}_t^h)+\tau \lambda_t^h=0,\quad t=0,...,T,~ h=1,...,H\\[3mm]
\text{(II)}\quad\frac{\partial \mathcal{L}(\overline{z};\theta,\nu,\varrho)}{\partial {a}_t^h}&=-\nu_t+\lambda_{t-1}^h-\lambda_t^h(\tau(1+r_t)-\delta) = 0, \quad t=1,...,T,~h=1,...,H\\[3mm]
\text{and the compl}&\text{ementary slackness condition holds}\\[3mm]
\text{(IV)}\hspace{1.0cm} \nu_t \sum_{h=1}^H \overline{a}_t^h&=0\quad t=1,...,T, \\[3mm]
\text{(V)}\hspace{2.4cm}&\theta_h,\nu_t \ge 0, \quad h=1,...,H, ~~ t=1,...,T.
\end{split}
\end{equation*}
\end{theorem}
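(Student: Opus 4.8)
The plan is to mirror the proof of Theorem~\ref{thm-noc}: I would apply the generalized Lagrange multiplier rule for minimal elements of \citet[pp.~165--167]{jahn} to problem (\ref{Phmod}) with the Lagrangian (\ref{Lagmod}), after verifying the constraint qualification at a minimal point, and then invoke convexity for the sufficiency direction. The only genuine change relative to Section~4 is the replacement of the single aggregate terminal inequality $\sum_{h=1}^H a^h_{T+1}\ge 0$ by the $H$ individual terminal equalities; as argued just before the theorem, $a^h_{T+1}=0$ at any Pareto point, so the terminal capital variables are eliminated and the equality block $f^h_t=0$, $t=0,...,T-1$, is supplemented by the $H$ constraints $0=\tau\omega_T l^h+(\tau(1+r_T)-\delta)a^h_T-\tau c^h_T$, while the inequality constraints $g_t(z)=-\sum_{h=1}^H a^h_t\le 0$ now run only for $t=1,...,T$. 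Existence of a minimal point carries over from Section~3: the feasible set of (\ref{Phmod}) is nonempty (choose consumption small in periods $0,...,T-1$ so that $a^h_T$ stays large, then $c^h_T=\omega_T l^h+(\tau(1+r_T)-\delta)a^h_T/\tau>0$ forces $a^h_{T+1}=0$), and the bound (\ref{amax}) together with the level-set argument makes the relevant section compact.

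Next I would re-establish the constraint qualification at a minimal $\overline{z}$. The gradients $\nabla f^h_t(\overline{z})$ for $t\le T-1$ are as computed in Section~4; the gradient of the $h$-th terminal equality carries $\tau$ in the $c^h_T$-component and $-(\tau(1+r_T)-\delta)$ in the $a^h_T$-component; and $\nabla g_t(\overline{z})$ carries $-1$ in each $a^h_t$-component for $t=1,...,T$. Linear independence of $\{\nabla f^h_t\}\cup\{\nabla g_j:\ j\in\mathrm{I}(\overline{z})\}$ then follows by the same argument as before: each $\nabla f^h_t$ is still the unique gradient with a nonzero entry in the $c^h_t$-component, which forces all $\lambda$-coefficients and hence all $\nu$-coefficients in any vanishing combination to be zero; and any direction $z-\overline{z}$ with strictly positive entries in all $a^h_t$-components, $t=1,...,T$, satisfies $\nabla g_t(\overline{z})^T(z-\overline{z})<0$ for $t\in\mathrm{I}(\overline{z})$. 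Hence the regularity assumptions of \citet{jahn} are met.

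Then I would apply the multiplier rule: there exist $\theta\in\R^H_+$ with at least one $\theta_h\ne 0$, $\nu\in\R^{T+1}_+$ (only $\nu_1,...,\nu_T$ being effective), and $\varrho=\lambda=(\lambda_0^1,...,\lambda_T^H)\in\R^{H(T+1)}$ with $\nabla_z\mathcal{L}(\overline{z};\theta,\nu,\varrho)=0$ and $\nu_t g_t(\overline{z})=0$. Differentiating (\ref{Lagmod}) gives the stated system: $\partial\mathcal{L}/\partial c^h_t$ yields (I) for $t=0,...,T$, where at $t=T$ the contribution $\tau\lambda^h_T$ comes from the last sum of (\ref{Lagmod}); $\partial\mathcal{L}/\partial a^h_t$ yields (II) for $t=1,...,T$, where at $t=T$ the term $-\lambda^h_T(\tau(1+r_T)-\delta)a^h_T$ in the last sum supplies the $\lambda^h_T$-coefficient so that (II) still closes; complementary slackness is (IV) and the sign conditions are (V). Compared with Theorem~\ref{thm-noc}, there is no longer a variable $a^h_{T+1}$, so equation (III) and the multiplier $\nu_{T+1}$ drop out. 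For the converse direction, since each $-u^h$ is convex by Assumption~\ref{ass_inada} ($(u^h)''<0$), $\varphi$ is convex, $g$ is linear and the $f^h_t$ are affine; the problem is therefore convex, so the system (I)--(V) is also sufficient for $\overline{z}$ to be a minimal element of $\varphi$ over the modified feasible set, i.e.\ Pareto-optimal in the sense of Definitions~\ref{po} and~\ref{minpt}.

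I expect the main obstacle to be the re-verification of the constraint qualification after swapping the aggregate terminal inequality for the $H$ individual terminal equalities: one must check that these new equality gradients neither spoil linear independence with the $\nabla g_j$ nor obstruct the existence of a strictly $g$-feasible direction. Once that is settled, the rest is a direct transcription of the partial derivatives of (\ref{Lagmod}), just as the proof of Theorem~\ref{thm-noc} was a transcription of the derivatives of (\ref{Lag}).
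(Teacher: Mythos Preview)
Your proposal is correct and follows essentially the same approach as the paper: the paper does not spell out a separate proof of Theorem~\ref{thm-mod} but relies on the remark preceding it that ``the arguments on existence of solutions and necessary and sufficient optimality conditions can be modified slightly,'' together with the one-line proof of Theorem~\ref{thm-noc} via \citet[pp.~165--167]{jahn} and convexity. Your re-verification of the constraint qualification after replacing the aggregate terminal inequality by the $H$ individual terminal equalities, and your observation that condition~(III) and $\nu_{T+1}$ drop out, are exactly the ``slight modifications'' the paper has in mind.
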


In a similar way as for the previous problem, we can collect the information in a different form which also leads to a  system of nonlinear equations.

\begin{theorem} \label{thm-prop1}
Consider the modified optimization problem (\ref{Phmod}) and let $\overline{c}^h_t, \overline{a}^h_t$ be minimal solutions. 
For given initial values $\overline{a}_0^1,....,\overline{a}_0^H$ with $\sum_{h=1}^H \overline{a}_0^h>0$  there exist multipliers $\theta = (\theta_1,...,\theta_H)$, all nonnegative with at least one being positive,
and $\nu=(\nu_1,...,\nu_{T})$, all nonnegative, such that the optimal capital stock $\overline{a}_t^h$ can be computed from the optimal consumption $\overline{c}_t^h$ by forward recursion
\begin{equation} \label{forward1}
\overline{a}_{t+1}^h=\tau \omega_t l^h+(\tau(1+r_t)-\delta)\overline{a}_t^h-\tau \overline{c}_t^h, \quad t=0,...,T-1.
\end{equation}
The optimal consumption $\overline{c}_t^h$ can be obtained by solving recursively backwards for each $h=1,...,H$
\begin{equation} \label{backward1}
\overline{c}^h_{t-1} = ((u^h)')^{-1} \left( \beta^h (\tau(1+r_t)-\delta)(u^h)'(\overline{c}^h_t) +
\frac{\tau~ \nu_t}{ (\beta^h)^{t-1} \theta_h} \right)    \quad t=T, T-1, ..., 1.
\end{equation}
with an unknown final condition.
Furthermore, the complementary slackness condition has to hold
\begin{equation} \label{comp1}
\nu_t \sum_{h=1}^H \overline{a}_t^h=0, ~~t=1,...,T
\end{equation}
and the capital at the final time vanishes
\begin{equation} \label{final}
\tau \omega_T l^h+(\tau(1+r_T)-\delta)\overline{a}_T^h-\tau \overline{c}_T^h = 0, ~~h=1,...H.
\end{equation}
\end{theorem}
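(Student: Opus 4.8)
The plan is to derive Theorem~\ref{thm-prop1} directly from the first order conditions in Theorem~\ref{thm-mod}; no new optimization theory is required, the content is just a change of variables in those conditions together with an invertibility argument for $(u^h)'$.

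First I would apply Theorem~\ref{thm-mod} to the minimal solution $(\overline{c}^h_t,\overline{a}^h_t)$, obtaining multipliers $\theta\in\R^H_+\setminus\{0\}$, $\lambda\in\R^{H(T+1)}$ and $\nu\in\R^{T+1}_+$ that satisfy (I), (II), (IV) and (V); the component $\nu_{T+1}$ never enters and is dropped. Two of the asserted relations are then immediate: the forward recursion (\ref{forward1}) is exactly the equality constraint of (\ref{Phmod}) for $t=0,\dots,T-1$, the terminal condition (\ref{final}) is the last-period constraint $\overline{a}^h_{T+1}=0$ written out, and (\ref{comp1}) coincides with (IV). What remains is to produce the consumption Euler equation (\ref{backward1}).

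Second I would eliminate $\lambda$. Condition (I) gives $\tau\lambda^h_t=\theta_h(\beta^h)^t(u^h)'(\overline{c}^h_t)$ for $t=0,\dots,T$; inserting this (at $t-1$ and at $t$) into the recursion $\lambda^h_{t-1}=\nu_t+\lambda^h_t(\tau(1+r_t)-\delta)$ coming from (II) and dividing by $\theta_h(\beta^h)^{t-1}$ gives
\[
(u^h)'(\overline{c}^h_{t-1})=\beta^h(\tau(1+r_t)-\delta)(u^h)'(\overline{c}^h_t)+\frac{\tau\,\nu_t}{(\beta^h)^{t-1}\theta_h},\qquad t=T,\dots,1.
\]
By Assumption~\ref{ass_inada} each $(u^h)'$ is continuous and strictly decreasing with range $(0,\infty)$, hence a bijection of $(0,\infty)$ onto itself, so $((u^h)')^{-1}$ is well defined; moreover $\tau(1+r_t)-\delta>0$ by (\ref{constants}) and $\nu_t\ge 0$, so the right-hand side is strictly positive and lies in the range of $(u^h)'$. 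Applying $((u^h)')^{-1}$ produces (\ref{backward1}). The recursion must be started from the value $\overline{c}^h_T$, which is not known a priori — this is the ``unknown final condition''; it is fixed implicitly, together with $\nu$, by combining the forward relation (\ref{forward1}), the terminal equation (\ref{final}) and the complementary slackness (\ref{comp1}).

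The one point that needs care — the \emph{main obstacle} — is the division by $\theta_h$ in (\ref{backward1}). In this model the $\lambda^h_t$ need no longer be independent of $h$: the argument of Corollary~\ref{cor1}(a) relied on condition (III), which disappears once $a^h_{T+1}$ is eliminated, so the strict positivity $\theta_h>0$ for all $h$ cannot be taken over verbatim. One therefore argues that if $\theta_{h^*}=0$ then (I) forces $\lambda^{h^*}\equiv 0$ and (II) forces $\nu_t=0$ for $t=1,\dots,T$, so that for such $h^*$ the Euler equation is vacuous, while for every $h$ with $\theta_h>0$ the division is legitimate and (\ref{backward1}) holds as stated. The underlying ``Pareto optimal $\Leftrightarrow$ multiplier system'' equivalence is inherited from Theorem~\ref{thm-mod}, which itself rests on Jahn's Karush--Kuhn--Tucker theorem and the convexity of (\ref{Phmod}).
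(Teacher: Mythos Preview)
Your proposal is correct and follows exactly the route taken in the paper: apply Theorem~\ref{thm-mod}, solve (I) for $\lambda^h_t$ in terms of $(u^h)'(\overline{c}^h_t)$, substitute into (II), and read off the remaining relations as the feasibility constraints and (IV). Your treatment is in fact more careful than the paper's one-line justification, since you explicitly verify the invertibility of $(u^h)'$ via the Inada conditions and discuss the case $\theta_{h^*}=0$, which the paper passes over in silence.
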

The statements follow from Theorem \ref{thm-mod} by solving (I) for $\overline{c}^h_t$  and replacing this way the $\lambda$ in (II).

We can use the  optimality conditions again to set up a system of nonlinear equations. Here, the unknown vector is $(\nu, c_T)$ and the nonlinear equations consist of the complementary slackness conditions and the vanishing final capital stock.  
\begin{corollary}  \label{cor-alg1}
Fix some positive multipliers $\theta \in \R^H$. 
The optimality conditions amount to finding a nonnegative solution $\nu \in \R^{T}$ and a vector $c_T \in \R^H$ of a  system of nonlinear equations $F(\nu,c_T)=0$, where $F: \R^{T+H} \rightarrow \R^{T+H}$ is defined as follows.

Given vectors $(\nu,c_T) \in \R^{T+H}$.
\begin{enumerate}
\item For $T+H$ given unknowns $\nu_1,....,\nu_{T},c_T^1,...,c_T^H$, compute $c_t^h$ backwards from 
\[
{c}^h_{t-1} = ((u^h)')^{-1} \left( \beta^h (\tau(1+r_t)-\delta)(u^h)'({c}^h_t) +
\frac{\tau~ \nu_t}{ (\beta^h)^{t-1} \theta_h} \right)   
\]
for $t=T,T-1,...,1$ and $h=1,...,H$.
\item Compute forward over time $t=0,...,T-1$
\[ {a}_{t+1}^h=\tau \omega_t l^h+(\tau(1+r_t)-\delta){a}_t^h-\tau c_t^h, ~~ h=1,...,H.
\]
\item Then the evaluation of $F(\nu,c_T)$ is given by
\[ F_t(\nu,c_T) = \nu_t \sum_{h=1}^H{a}_t^h, ~~t=1,...,T
\]
and
\[ F_{T+h} (\nu,c_T) = \tau \omega_T l^h +(\tau(1+r_T)-\delta){a}_T^h-\tau {c}_T^h, ~~h=1,...,H
\]
\end{enumerate}
\end{corollary}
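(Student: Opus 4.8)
The plan is to read Corollary \ref{cor-alg1} as a pure bookkeeping reformulation of the necessary and sufficient conditions of Theorem \ref{thm-mod} (equivalently, as Theorem \ref{thm-prop1} recast as a root-finding problem): for any fixed vector $\theta$ of positive weights I want to show that a feasible point $\overline{z}$ of (\ref{Phmod}) is Pareto optimal if and only if the pair $(\nu,c_T)$ it induces solves $F(\nu,c_T)=0$ with $\nu\ge 0$, where $F$ is assembled exactly by Steps 1--3 of the statement. So the proof is a walk through conditions (I), (II), (IV), (V) of Theorem \ref{thm-mod} and the equality constraints of (\ref{Phmod}), pairing each with one piece of the construction of $F$.

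First I would eliminate the adjoint variables $\lambda_t^h$. Condition (I) reads $\lambda_t^h=\theta_h(\beta^h)^t(u^h)'(\overline{c}_t^h)/\tau$; by Assumption \ref{ass_inada} each $(u^h)'$ is a strictly decreasing continuous bijection of $(0,\infty)$ onto $(0,\infty)$, so this identity is invertible and prescribing the consumptions $\overline{c}_t^h$ is equivalent to prescribing the multipliers $\lambda_t^h$. Substituting it into condition (II), $\lambda_{t-1}^h=\lambda_t^h(\tau(1+r_t)-\delta)+\nu_t$, and solving for $\overline{c}_{t-1}^h$ produces precisely the backward recursion of Step 1, equation (\ref{backward1}), run separately for each household; unlike in Corollary \ref{cor1}(a), the modified problem has no terminal relation forcing the $\lambda_T^h$ — equivalently the $\overline{c}_T^h$ — to agree across $h$, which is exactly why the $H$ terminal consumptions $\overline{c}_T^1,\dots,\overline{c}_T^H$ survive as free unknowns alongside $\nu$. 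The equality constraints of (\ref{Phmod}) for $t=0,\dots,T-1$ are verbatim the forward recursion of Step 2, equation (\ref{forward1}), so once $(\nu,c_T)$ is chosen the capital path $\overline{a}_t^h$ is determined from the initial data. What remains are the complementary slackness condition (IV), $\nu_t\sum_{h=1}^H\overline{a}_t^h=0$ for $t=1,\dots,T$, which is exactly $F_t(\nu,c_T)=0$, and the terminal equality constraint of (\ref{Phmod}), $\tau\omega_T l^h+(\tau(1+r_T)-\delta)\overline{a}_T^h-\tau\overline{c}_T^h=0$, which is exactly $F_{T+h}(\nu,c_T)=0$; the sign condition $\nu\ge 0$ is condition (V). Counting, $(\nu,c_T)\in\R^{T+H}$ and $F$ has $T+H$ components, matching the claimed signature $F:\R^{T+H}\to\R^{T+H}$.

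The converse direction reverses the same substitutions: given $\nu\ge 0$ with $F(\nu,c_T)=0$, Steps 1--2 manufacture consumptions $\overline{c}_t^h$, a capital path $\overline{a}_t^h$, and multipliers $\lambda_t^h$ (via (I)) together with $\theta,\nu$ satisfying (I), (II), (IV), (V), so the sufficiency half of Theorem \ref{thm-mod} makes the resulting $\overline{z}$ Pareto optimal, \emph{provided} $\overline{z}$ is feasible for (\ref{Phmod}). That proviso is the one genuinely delicate point, and the step I would handle most carefully: the conditions of (\ref{Phmod}) not already encoded in $F(\nu,c_T)=0$ and $\nu\ge 0$ are the strict positivity $\overline{c}_t^h>0$ and the aggregate inequalities $\sum_{h=1}^H\overline{a}_t^h\ge 0$ for $t=1,\dots,T$. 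The positivity of the consumptions propagates automatically backwards from $\overline{c}_T^h>0$: for $\nu\ge 0$ the argument fed to $((u^h)')^{-1}$ in (\ref{backward1}) is a sum of a positive and a nonnegative term — here one uses $\tau(1+r_t)-\delta>0$, which follows from (\ref{constants}) — and $((u^h)')^{-1}$ has range $(0,\infty)$. The aggregate-capital inequalities, however, are not forced by $F=0$ and must be verified a posteriori (and monitored when the corollary is used as a numerical scheme). With that check in hand the equivalence is complete, and the closing remark that varying the positive weights $\theta$ sweeps out the efficient frontier is just the statement that Theorem \ref{thm-mod} is an ``if and only if'' for every admissible $\theta$.
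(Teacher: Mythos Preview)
Your proposal is correct and follows exactly the approach the paper takes: the paper offers no separate proof of Corollary \ref{cor-alg1} beyond the sentence preceding Theorem \ref{thm-prop1} (``The statements follow from Theorem \ref{thm-mod} by solving (I) for $\overline{c}^h_t$ and replacing this way the $\lambda$ in (II)'') and the remark that the unknowns are $(\nu,c_T)$ with the nonlinear equations given by complementary slackness and the vanishing final capital stock. Your write-up is in fact more careful than the paper's, in particular your observation that the feasibility inequalities $\sum_{h=1}^H a_t^h\ge 0$ are not encoded in $F=0$ and must be checked a posteriori; the paper does not flag this.
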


\section{Numerical Results}

In order to analyze the economic implications of default or non-default in the generalized Ramsey model with new budget constraint, we implemented both problem formulations and compared the Pareto optimal solutions of the two models.\\

Within this paper, all households have identical preferences according to instantaneous utility. They differ in their initial 
capital endowments and time preferences. We expect that the most patient household, which is the one with the highest $\beta$, will 
consume the most in both models. This is only logical since the time preference is a multiplicative constant in the objective function. 

We consider a coalition of four different households, $100$ time steps and constant interest and wage rates. We assume logarithmic utility preferences for all households, constant labor supply and a depreciation rate greater than zero. In order to solve the optimality conditions, we have to fix the `scalarization'-parameters $\theta^h$ as well. Every Pareto solution will depend on one special choice of these $\theta^h$. Varying these parameters yields the whole Pareto front, but in this framework we will fix them such that all households are weighted equally. 

The parameter constellation is then given by 
\[H=4,\ \theta^h=1/H,\ T=100,\ r=0.03,\ \omega=1,\ l=1,\ \tau=1,\ \delta=0.01\]
and  $u^h(c)=log(c)$ for all $h=1,...,H$. The households differ according to their time preferences, $\beta^h$, 
which we set to 
\[\beta^1=0.9,\ \beta^2=0.93,\ \beta^3=0.95,\ \beta^4=0.98\]
and their initial capital endowments which we fix at
\[a_0^1=30,\ a_0^2=20,\ a_0^3=10,\ a_0^4=10.\]

We obtain the following results for the problem with default:
\begin{figure}[H]
\begin{minipage}{0.5\textwidth}
\includegraphics[width=\textwidth]{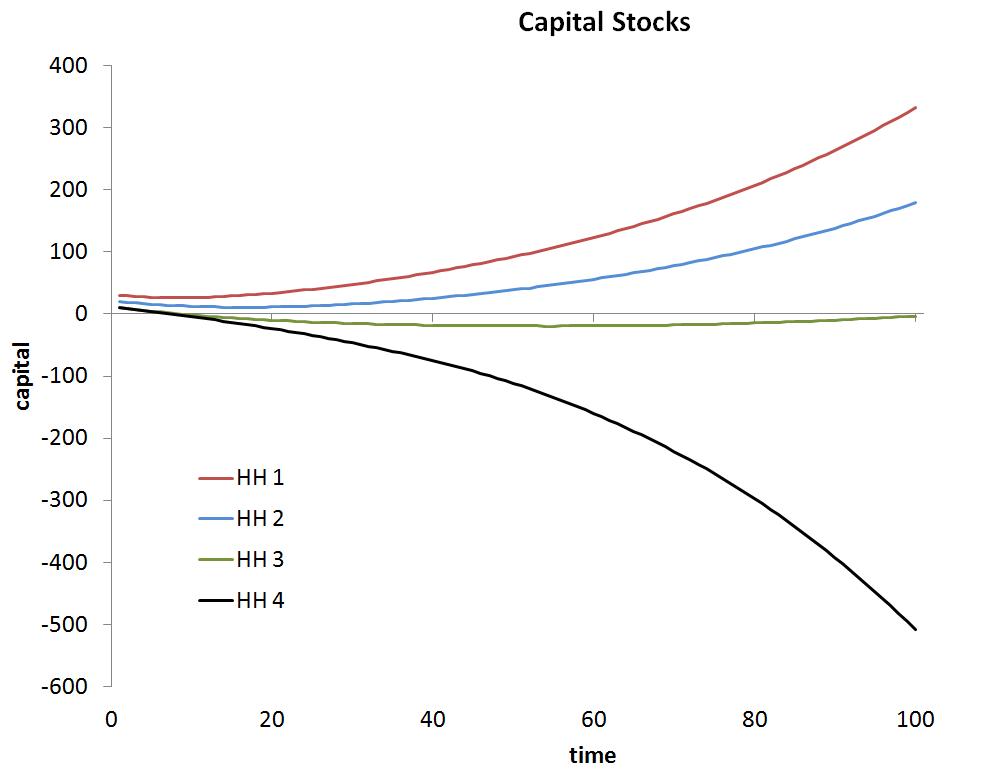}
\caption{Capital Stocks in the Model with Default}
\end{minipage}
\begin{minipage}{0.5\textwidth}
\includegraphics[width=\textwidth]{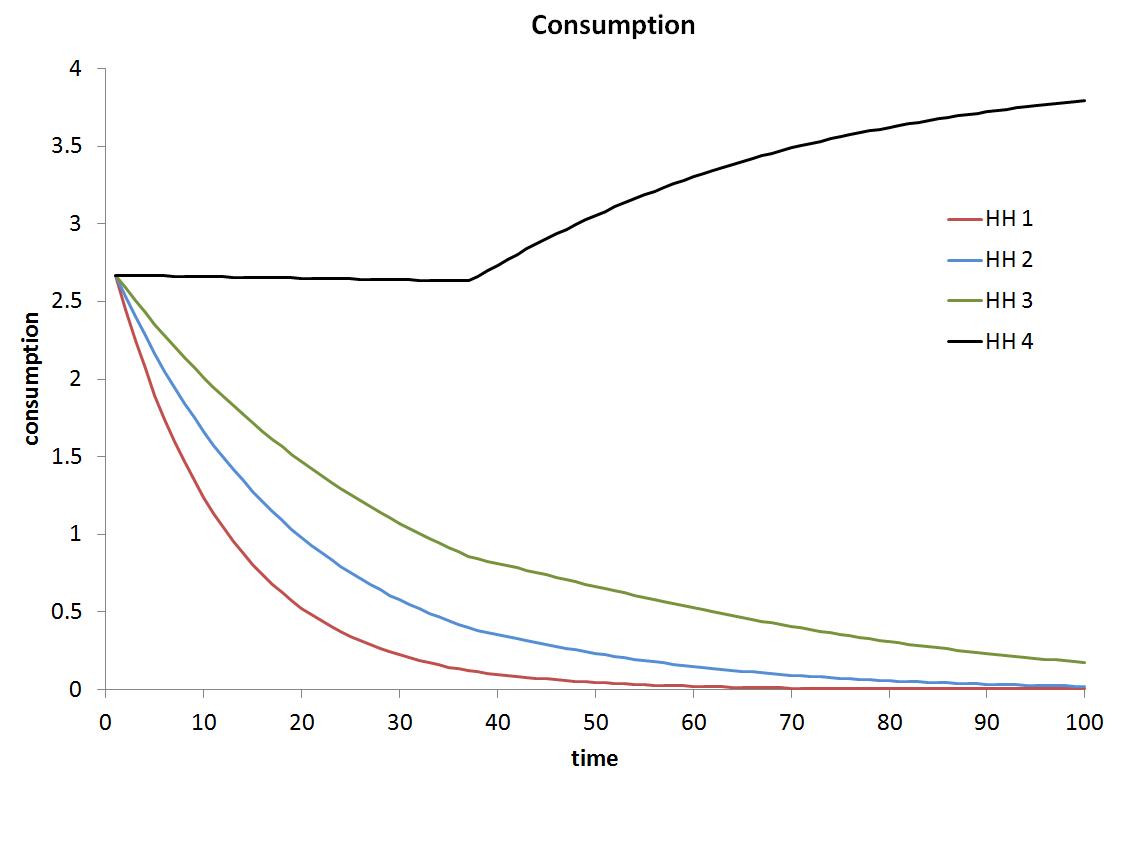}
\caption{Consumption Paths in the Model with Default}
\end{minipage}
\end{figure}
The plot shows that the most patient household (i.e. number $4$) leaves the economy at the end with high debt. He seems to run a Ponzi scheme. Household $3$ only slightly engages in debt. The other two households hold positive capital stocks to fund that debt of $3$ and $4$. We also oberserve (see Figure \ref{aggCap}) that the coalition reduces its initial net-capital stock  at a high rate. After 38 periods, it does not hold positive net-capital stocks anymore. From a macroeconomic perspective, this strategy cannot serve as a role model for the economy. 

Consumption of households $2-4$ turns out as expected, but household $1$ exhibts a strange pattern: As long as the the coalitions net-capital stock is strictly positive, he slightly reduces consumption. Afterwards (i.e. $t \geq 38$) he starts to consume more at a decreasing rate. The households with smaller $\beta$ prefer consumption at the very beginning. The more patient a household is, the more consumption is shifted towards the end (remark that the households are ordered according to the size of their $\beta$ starting with household one). 

In the nondefault model, where every household has to leave the economy at the end with at least a nonnegative capital stock, the behaviour of the households is quite different to the previous case. 
\begin{figure}[H]
\begin{minipage}{0.5\textwidth}
\includegraphics[width=\textwidth]{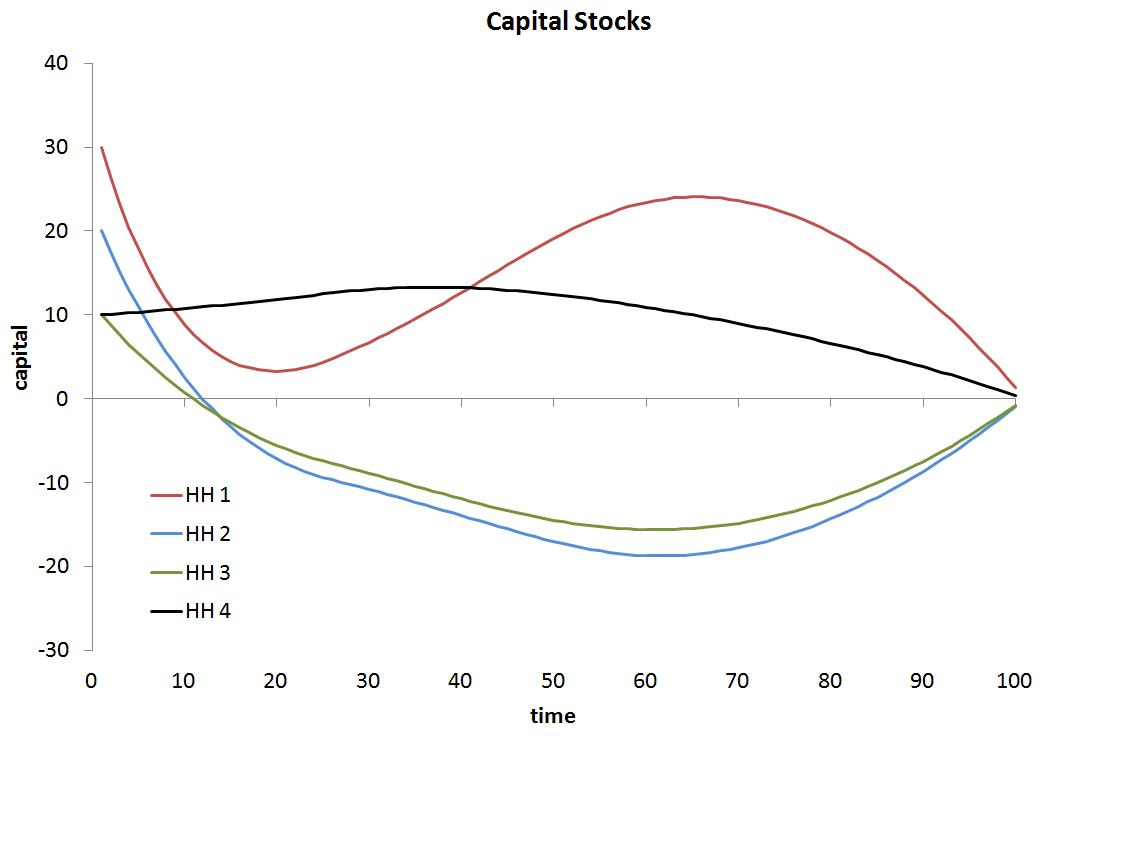}
\caption{Capital Stocks in the Model without Default}
\end{minipage}
\begin{minipage}{0.5\textwidth}
\includegraphics[width=\textwidth]{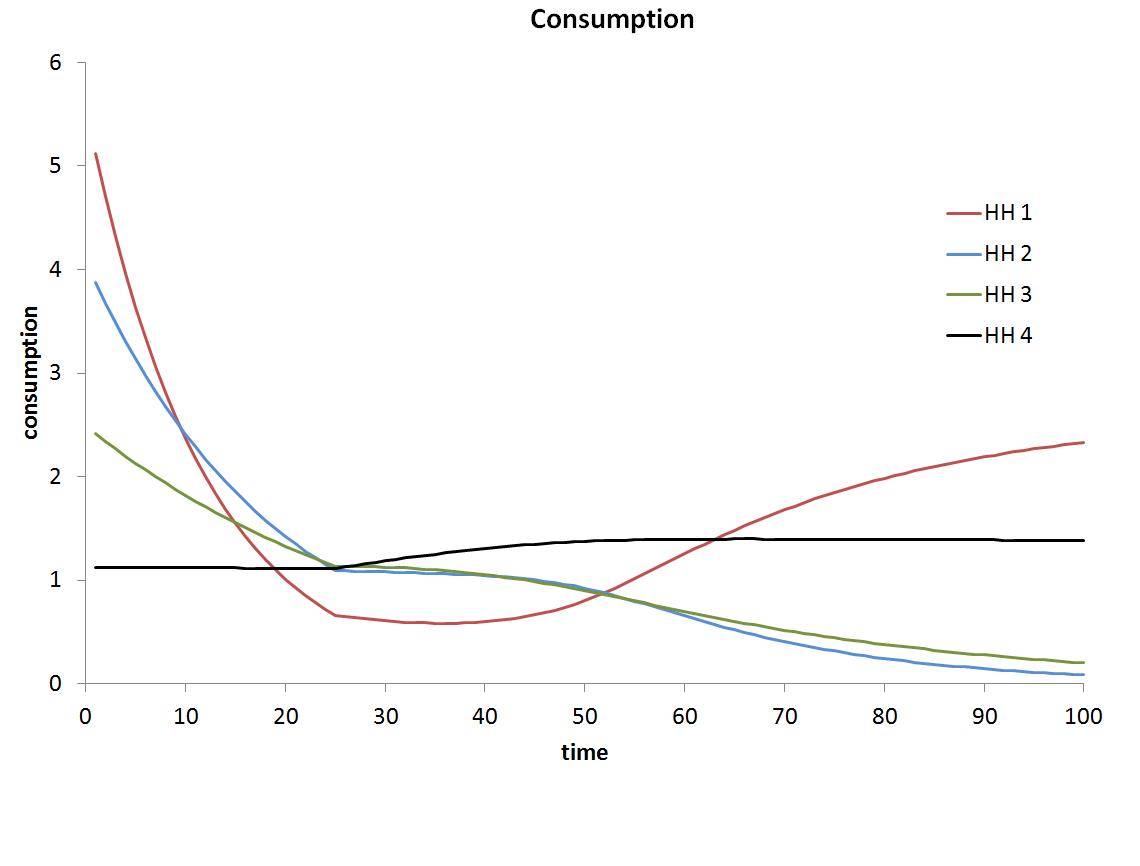}
\caption{Consumption Paths in the Model without Default}
\end{minipage}
\end{figure}

Indeed, there are still some households that finance consumption of others, i.e. they incur debt and do not raise their consumption level, but the households have changed their roles. In the model without default household four, which is the most patient one, and household one hold positive capital stocks in every period. Household two and three now incur debt. 

Different than we expected, the household with the highest $\beta$ has a nearly constant level of consumption. We would expect that its consumption would raise towards the end  since its gain of utility is discounted weaker compared to the others. 
The saving and consumption behavior of household one surprises as well. This household has the smallest time discount factor $\beta$. Hence we would expect it to prefer consumption at the beginning and then reduce consumption towards the end. The initial consumption is indeed very high and the consumption level then decreases quite fast but it rises again after period 50. Another interesting observation is the small difference between household two and three. The development of their capital stocks seems to be quite similar as does their consumption behavior.


Comparing the solutions of both problems, we conclude that permitting agents to default on their debt as long as the coalition is on its budget contraint at any point in time makes a huge difference.  In the model with default the most patient household seems to dominate the less patient ones. It incurs high debt and consumes the most. Moreover, it nearly forces the other households in poverty. This exclusive right to consumption owned by the fourth household vanishes in the model without default. Here, not only time preferences but also initial capital endowments seem to have an impact on the optimal capital and consumption allocation.

If we consider the aggregated capital stocks and consumption paths, both problems are quite similar. Aggregated consumption stays nearly constant over time. The aggregated capital stock declines in both cases. 

\begin{figure}[H]
\begin{minipage}{0.5\textwidth}
\includegraphics[width=\textwidth]{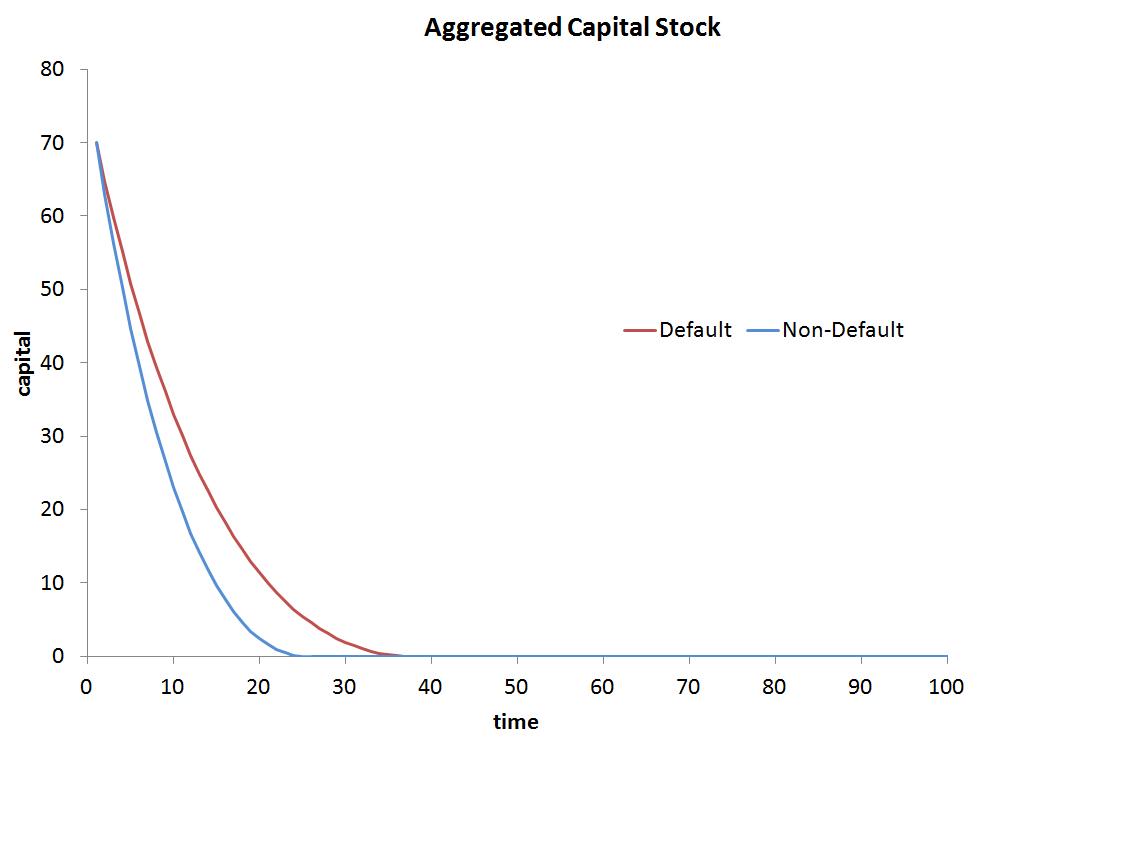}
\caption{Aggregated Capital Stock with and without default}
\label{aggCap}
\end{minipage}
\begin{minipage}{0.5\textwidth}
\includegraphics[width=\textwidth]{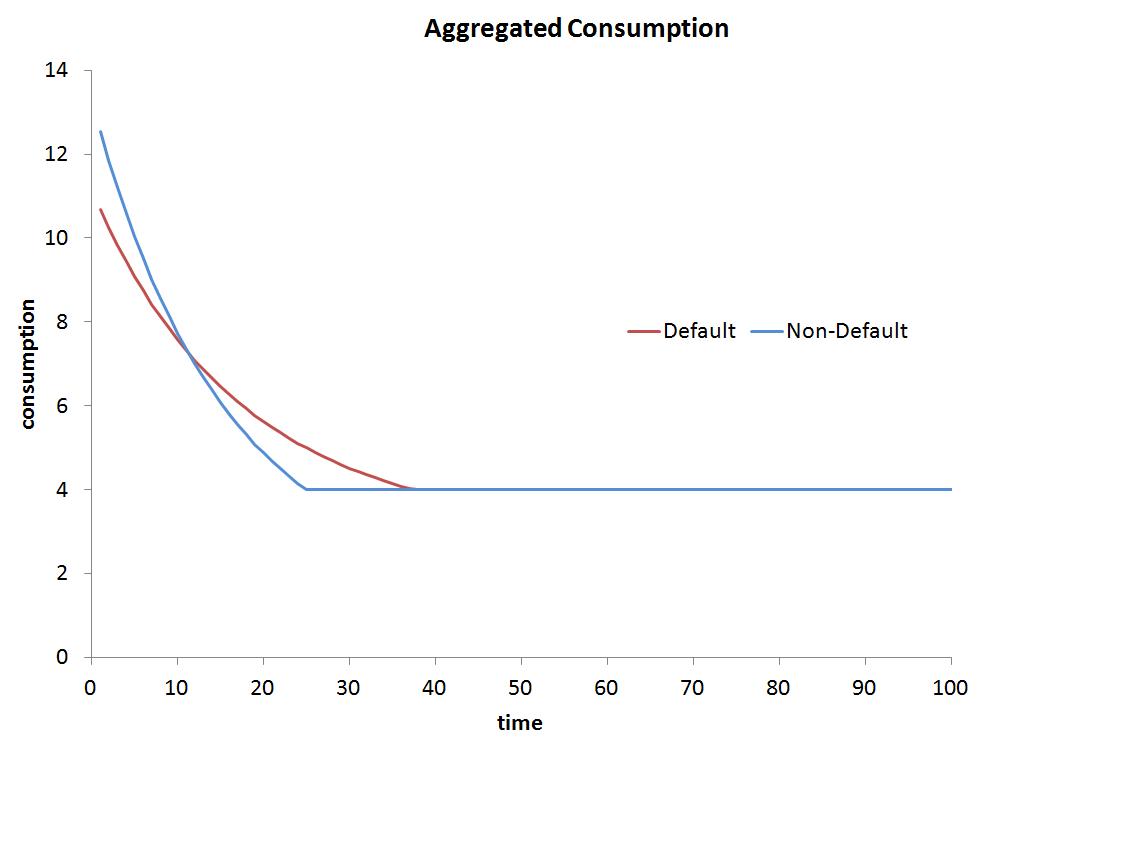}
\caption{Aggregated Consumption with and without default}
\end{minipage}
\end{figure}

\section{Conclusion}

In this paper, we introduce a Ramsey model with an agreed default within a coalition of heterogeneous households. This problem leads to a vector-valued objective function which is treated in the context of vector optimization. We work out a rigorous mathematical framework in order to prove the existence of the utility maximizing allocations of capital and consumption via time for all agents. Necessary and sufficient optimality conditions are derived for this constrained nonlinear multi-objective optimization problem. We adapt this model to the case of no default. As shown in the numerical example, this adaptation can have a huge impact on the behavior of the single households, whereas the Pareto optimal solution of both problems does not change much if the coalition is considered from outside.


 \newpage
\bibliography{literature} 
 \bibliographystyle{apalike}    

\end{document}